\DeclareMathOperator{\fc}{\xrightarrow[]{mo}}
\DeclareMathOperator{\uoc}{\xrightarrow[]{uo}}
\DeclareMathOperator{\oc}{\xrightarrow[]{o}}
\begin{document}
%

\markboth{A. Ayd{\i}n}{$mo$-convergence}

\title{Multiplicative order convergence in $f$-algebras}

\author{Abdullah Ayd{\i}n\coraut$^{1}$}

\address{$^1$Department of Mathematics, Mu\c{s} Alparslan University, Mu\c{s}, Turkey\\}
\emails{aaydin.aabdullah@gmail.com}
\maketitle
\begin{abstract}
A net $(x_\alpha)$ in an $f$-algebra $E$ is said to be multiplicative order convergent to $x\in E$ if $\left|x_\alpha-x\right|u\oc 0$ for all $u\in E_+$. In this paper, we introduce the notions $mo$-convergence, $mo$-Cauchy, $mo$-complete, $mo$-continuous and $mo$-KB-space. Moreover, we study the basic properties of these notions.
\end{abstract}
\subjclass{46A40, 46E30}  
\keywords{$mo$-convergence, $f$-algebra, $mo$-KB-space, vector lattice}        

\hinfoo{DD.MM.YYYY}{DD.MM.YYYY} 


\section{Introductory facts}\label{sec1}
In spite of the nature of the classical theory of Riesz algebra and $f$-algebra, as far as we know, the concept of convergence in $f$-algebras related to multiplication has not been done before. However, there are some close studies under the name unbounded convergence in some kinds of vector lattices; see for example \cite{AAyd,AEEM2,AGG,GTX}. Our aim is to introduce the concept of $mo$-convergence by using the multiplication in $f$-algebras.

First of all, let us remember some notations and terminologies used in this paper. Let $E$ be a real vector space. Then $E$ is called \textit{ordered vector space} if it has an order relation $\leq$ (i.e, $\leq$ is reflexive, antisymmetric and transitive) that is compatible with the algebraic structure of $E$ that means $y\leq x$ implies $y+z\leq x+z$ for all $z\in E$ and $\lambda y\leq \lambda x$ for each $\lambda\geq 0$. An ordered vector $E$ is said to be {\em vector lattice} (or, {\em Riesz space}) if, for each pair of vectors $x,y\in E$, the supremum $x\vee y=\sup\{x,y\}$ and the infimum $x\wedge y=\inf\{x,y\}$ both exist in E. Moreover, $x^+:=x\vee 0$, $x^-:=(-x)\vee0$, and $\lvert x\rvert:=x\vee(-x)$ are called the {\em positive} part, the {\em negative} part, and the {\em absolute value} of $x\in E$, respectively. Also, two vector $x$, $y$ in a vector lattice is said to be {\em disjoint} whenever $\lvert x\rvert\wedge\lvert y\rvert=0$. A vector lattice $E$ is called \textit{order complete} if $0\leq x_\alpha\uparrow\leq x$ implies the existence of $\sup{x_\alpha}$ in $E$. A subset $A$ of a vector lattice is called \textit{solid} whenever $\left|x\right|\leq\left|y\right|$ and $y\in A$ imply $x\in A$. A solid vector subspace is referred to as an \textit{order ideal}. An order closed ideal is referred to as a \textit{band}. A sublattice $Y$ of a vector lattice is majorizing $E$ if, for every $x\in E$, there exists $y\in Y$ with $x\leq y$. A partially ordered set $I$ is called {\em directed} if, for each $a_1,a_2\in I$, there is another $a\in I$ such that $a\geq a_1$ and $a\geq a_2$ (or, $a\leq a_1$ and $a\leq a_2$). A function from a directed set $I$ into a set $E$ is called a {\em net} in $E$. A net $(x_\alpha)_{\alpha\in A}$ in a vector lattice $X$ is called \textit{order convergent} (or shortly, \textit{$o$-convergent}) to $x\in X$, if there exists another net $(y_\beta)_{\beta\in B}$ satisfying $y_\beta \downarrow 0$, and for any $\beta\in B$ there exists $\alpha_\beta\in A$ such that $|x_\alpha-x|\leq y_\beta$ for all $\alpha\geq\alpha_\beta$. In this case, we write $x_\alpha\xrightarrow{o} x$; for more details see for example \cite{ABPO,Vul,Za}.

A vector lattice $E$ under an associative multiplication is said to be a \textit{Riesz algebra} whenever the multiplication makes $E$ an algebra (with the usual properties), and in addition, it satisfies the following property: $x,y\in E_+$ implies $xy\in E_+$. A Riesz algebra $E$ is called \textit{commutative} if $xy=yx$ for all $x,y\in E$. A Riesz algebra $E$ is called \textit{$f$-algebra} if $E$ has additionally property that $x\wedge y=0$ implies $(xz)\wedge y=(zx)\wedge y=0$ for all $z\in E_+$; see for example \cite{ABPO}. A vector lattice $E$ is called \textit{Archimedean} whenever $\frac{1}{n}x\downarrow 0$ holds in $E$ for each $x\in E_+$. Every Archimedean $f$-algebra is commutative; see Theorem 140.10 \cite{Za}. Assume $E$ is an Archimedean $f$-algebra with a multiplicative unit vector $e$. Then, by applying Theorem 142.1(v) \cite{Za}, in view of $e=ee=e^2\geq0$, it can be seen that $e$ is a positive vector. In this article, unless otherwise, all vector lattices are assumed to be real and Archimedean, and so $f$-algebras are commutative.

Recall that a net $(x_\alpha)$ in a vector lattice $E$ is \textit{unbounded order convergent} (or shortly, \textit{$uo$-convergent}) to $x\in E$ if $|x_\alpha-x|\wedge u\xrightarrow{o}0$ for every $u\in E_+$. In this case, we write $x_\alpha\xrightarrow{uo}x$; see for example \cite{GTX} and \cite{AAyd,AEEM2,AGG}. Motivated from this definition, we give the following notion.
\begin{definition}
Let $E$ be an $f$-algebra. A net $(x_\alpha)$ in $E$ is said to be {\em multiplicative order convergent} to $x\in E$ (shortly, $(x_\alpha)$ $mo$-converges to $x$) if $\left|x_\alpha-x\right|u\oc 0$ for all $u\in E_+$. Abbreviated as $x_\alpha\fc x$. 
\end{definition}

It is clear that $x_\alpha\fc x$ in an $f$-algebra $E$ implies $x_\alpha y\fc xy$ for all $y\in E$ because of $\left|xy\right|=\left|x\right|\left|y\right|$ for all $x,y\in E$. We shall keep in mind the following useful lemma, obtained from the property of  $xy\in E_+$ for every $x,y\in E_+$.
\begin{lemma}\label{basic monotonicity}
If $y\leq x$ is provided in an $f$-algebra $E$ then $uy\leq ux$ for all $u\in E_+$.
\end{lemma}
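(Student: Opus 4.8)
The plan is to reduce the inequality $uy \leq ux$ to the single structural axiom of a Riesz algebra, namely that the product of two positive elements is again positive. First I would translate the hypothesis $y \leq x$ into the equivalent statement that $x - y \in E_+$, which is legitimate because the order on $E$ is compatible with the vector space structure (adding $-y$ to both sides). This is the only place where the ordered-vector-space axioms enter.

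Next I would invoke the defining property of a Riesz algebra: since $u \in E_+$ and $x - y \in E_+$, the product $u(x-y)$ lies in $E_+$. Then, using the distributive law for the algebra multiplication, $u(x-y) = ux - uy$, so $ux - uy \in E_+$. Translating back via order-compatibility of addition gives $uy \leq ux$, which is the claim.

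I do not anticipate any genuine obstacle here; the statement is essentially a restatement of the positivity-of-products axiom together with distributivity, and it requires neither the $f$-algebra identity ($x \wedge y = 0 \Rightarrow (xz)\wedge y = 0$) nor the Archimedean/commutativity hypotheses in force elsewhere in the paper. The only point worth a moment's care is to make sure the rewriting $u(x-y) = ux - uy$ uses left-distributivity of the multiplication over addition, which is part of the algebra axioms, so the argument goes through verbatim whether or not $E$ is commutative. I would therefore present the proof as three short lines: $y \leq x \Rightarrow x - y \in E_+ \Rightarrow u(x-y) = ux - uy \in E_+ \Rightarrow uy \leq ux$.
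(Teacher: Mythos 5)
Your argument is correct and is exactly the one the paper intends: the lemma is stated as an immediate consequence of the Riesz-algebra axiom that $x,y\in E_+$ implies $xy\in E_+$, applied to $u$ and $x-y$ together with distributivity. No further comment is needed.
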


Recall that multiplication by a positive element in $f$-algebras is a vector lattice homomorphism, i.e., $u(x\wedge y)=(ux)\wedge(uy)$ and $u(x\vee y)=(ux)\vee(uy)$ for every positive element $u$; see for example Theorem 142.1(i) \cite{Za}. We will denote an $f$-algebra $E$ as \textit{infinite distributive} $f$-algebra whenever the following condition holds: if $\inf(A)$ exists for any subset $A$ of $E_+$ then the infimum of the subset $uA$ exists and $\inf(uA)=u\inf(A)$ for each positive vector $u\in E_+$. For a net $(x_\alpha)\downarrow 0$ in an infinite distributive $f$-algebra, the net $(ux_\alpha)$ is also decreasing to zero for all positive vector $u$.

\begin{remark}\label{order con iff f-conv}
The order convergence implies the $mo$-convergence in infinite distributive $f$-algebras. The converse holds true in $f$-algebras with multiplication unit. Indeed, assume a net $(x_\alpha)_{\alpha \in A}$ order converges to $x$ in an infinite distributive $f$-algebra $E$. Then there exists another net $(y_\beta)_{\beta\in B}$ satisfying $y_\beta \downarrow 0$, and, for any $\beta\in B$, there exists $\alpha_\beta\in A$ such that $|x_\alpha-x|\leq y_\beta$. Hence, we have $|x_\alpha-x|u\leq y_\beta u$ for all $\alpha\geq\alpha_\beta$ and for each $u\in E_+$. Since $y_\beta \downarrow$, we have $uy_\beta\downarrow$ for each $u\in E_+$ by Lemma \ref{basic monotonicity}, and $\inf(uy_\beta)=u\inf(y_\beta)=0$ because of $\inf(y_\beta)=0$. Therefore, $\left|x_\alpha-x\right|u\oc 0$ for each $u\in E_+$. That means $x_\alpha\fc x$.
	
For the converse, assume $E$ is an $f$-algebra with multiplication unit $e$ and $x_\alpha\fc x$ in $E$. That is, $\left|x_\alpha-x\right|u\oc 0$ for all $u\in E_+$. Since $e\in E_+$, in particular, choose $u=e$, and so we have $\left|x_\alpha-x\right|=\left|x_\alpha-x\right|e\oc 0$, or $x_\alpha\oc x$ in $E$.
\end{remark}

By considering  Example 141.5 \cite{Za}, we give the following example.
\begin{example}
Let $[a,b]$ be a closed interval in $\mathbb{R}$ and let $E$ be vector lattice of all reel continuous functions on $[a,b]$ such that the graph of functions consists of a finite number of line segments. In view of Theorem 141.1 \cite{Za}, every positive orthomorphism $\pi$ in $E$ is trivial orthomorphism, i.e., there is a reel number $\lambda$ such that $\pi (f)=\lambda f$ for all $f\in E$. Therefore, a net of positive orthomorphism $(\pi_\alpha)$ is order convergent to $\pi$  iff it is $mo$-convergent to $\pi$ whenever the multiplication is the natural multiplicative, i.e., $\pi_1\pi_2(f)=\pi_1(\pi_2f)$ for all $\pi_1,\pi_2\in Orth(E)$ and all $f\in E$. Indeed, $Orth(E)$ is Archimedean $f$-algebra with the identity operator as a unit element; see Theorem 140.4 \cite{Za}. So, by applying Remark \ref{order con iff f-conv}, the $mo$-convergence implies the order convergence of the net $(\pi_\alpha)$. 

Conversely, assume the net of positive orthomorphisms $\pi_\alpha\oc \pi$ in $Orth(E)$. Then we have  $\pi_\alpha(f)\oc \pi(f)$ for all $f\in E$; see Theorem VIII.2.3 \cite{Vul}. For fixed $0\leq\mu\in Orth(E)$, there is a reel number $\lambda_\mu$ such that $\mu(f)=\lambda_\mu f$ for all $f\in E$. Since  $\left|\pi_\alpha(f)-\pi(f)\right|=\left|\lambda_{\pi_\alpha}f-\lambda_\pi f\right|\oc 0$, we have
$$
\left|(\pi_\alpha)f-(\pi) f\right|\mu=\left|\mu\lambda_{\pi_\alpha}f-\mu\lambda_\pi f\right|=\left|\lambda_\mu\lambda_{\pi_\alpha}f-\lambda_\mu\lambda_\pi f\right|=\left|\lambda_\mu\right|\left|\lambda_{\pi_\alpha}f-\lambda_\pi f\right|\oc 0
$$
for all $f\in E$. Since $\mu$ is arbitrary, we get $\pi_\alpha\fc \pi$.
\end{example}

\section{Main results}
We begin the section with the next list of properties of the $mo$-convergence which follows directly from Lemma \ref{basic monotonicity}, and the inequalities $\left|x-y\right| \leq \left|x-x_\alpha\right|+\left|x_\alpha-y\right|$ and $\left| \left|x_\alpha\right|-\left|x\right| \right|\leq\left| x_\alpha-x\right|$.
\begin{lemma}
Let $x_\alpha\fc x$ and $y_\alpha\fc y$ in an $f$-algebra $E$. Then the following holds:
\begin{enumerate}
\item[(i)] $x_\alpha\fc x$ iff $(x_\alpha- x)\fc 0$;
\item[(ii)] if $x_\alpha\fc x$ then $y_\beta\fc x$ for each subnet $(y_\beta)$ of $(x_\alpha)$;
\item[(iii)] suppose $x_\alpha\fc x$ and $y_\beta\fc y$ then $ax_\alpha+by_\beta\fc ax+by$ for any $a,b\in \mathbb{R}$;
\item[(iv)] if $x_\alpha \fc x$ and $x_\alpha \fc y$ then $x=y$; 
\item[(v)] if $x_\alpha \fc x$ then $\lvert x_\alpha\rvert \fc \lvert x \rvert$.
\end{enumerate}
\end{lemma}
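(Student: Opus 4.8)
The plan is to reduce every item to the already-established facts about order convergence in vector lattices, using Lemma \ref{basic monotonicity} to push the positive multiplier $u$ through the relevant inequalities. For (i), since $|x_\alpha-x| = |(x_\alpha-x)-0|$, the defining condition $|x_\alpha-x|u\oc 0$ for all $u\in E_+$ is literally the same statement whether phrased for the net $(x_\alpha)$ converging to $x$ or for the net $(x_\alpha-x)$ converging to $0$; so (i) is immediate from the definition. For (ii), if $(y_\beta)$ is a subnet of $(x_\alpha)$, then for each fixed $u\in E_+$ the net $(|y_\beta-x|u)$ is a subnet of $(|x_\alpha-x|u)$; since order convergence passes to subnets, $|y_\beta-x|u\oc 0$ for all $u\in E_+$, i.e. $y_\beta\fc x$.

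For (iii), fix $u\in E_+$. Using the triangle inequality $|(ax_\alpha+by_\beta)-(ax+by)|\le |a|\,|x_\alpha-x| + |b|\,|y_\beta-y|$ and then multiplying by $u$ (which preserves order by Lemma \ref{basic monotonicity}), we get
\[
\bigl|(ax_\alpha+by_\beta)-(ax+by)\bigr|u \le |a|\,|x_\alpha-x|u + |b|\,|y_\beta-y|u .
\]
Both terms on the right order-converge to $0$ (the first because $x_\alpha\fc x$, the second because $y_\beta\fc y$), hence so does their sum, and therefore by the squeeze property of order convergence the left-hand side order-converges to $0$. Here one must be a little careful that the index set for the combined net $(ax_\alpha+by_\beta)$ is the product directed set $A\times B$, and that a subnet-type argument shows $|x_\alpha-x|u$, reindexed over $A\times B$, still order-converges to $0$; this is routine. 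For (iv), note (iii) with $a=1,b=-1$ (using $x_\alpha\fc x$ and $x_\alpha\fc y$, viewing the second as a net indexed the same way) gives $x_\alpha-x_\alpha = 0 \fc x-y$, so $|x-y|u = 0$ for every $u\in E_+$; choosing $u$ appropriately — or, if $E$ has a unit $e$, simply $u=e$ — and in general using that $|x-y|\,|x-y| = 0$ forces $|x-y|=0$ by the Archimedean $f$-algebra property (an $f$-algebra has no nonzero nilpotents of order two; alternatively $|x-y|\wedge|x-y|=|x-y|$ combined with disjointness arguments), we conclude $x=y$. For (v), apply $u$-multiplication to $\bigl||x_\alpha|-|x|\bigr|\le |x_\alpha-x|$ to get $\bigl||x_\alpha|-|x|\bigr|u \le |x_\alpha-x|u\oc 0$, whence $|x_\alpha|\fc|x|$ by the squeeze property.

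The only genuinely delicate point is (iv): one needs $|x-y|u=0$ for all $u\in E_+$ to imply $x=y$. If $E$ has a multiplicative unit this is trivial, but in a general $f$-algebra it requires the observation that taking $u=|x-y|$ gives $|x-y|^2=0$, and in an Archimedean $f$-algebra $a\ge 0$, $a^2=0$ implies $a=0$ (indeed $a\wedge a = a$ and squaring annihilates, so $a$ is disjoint from itself). I expect this to be the main obstacle to a fully rigorous write-up; everything else is a mechanical transfer of standard order-convergence facts through Lemma \ref{basic monotonicity}.
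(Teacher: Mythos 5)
Items (i), (ii), (iii) and (v) of your proposal are exactly what the paper intends: the paper gives no written proof, remarking only that the lemma ``follows directly'' from Lemma~\ref{basic monotonicity} and the inequalities $|x-y|\le|x-x_\alpha|+|x_\alpha-y|$ and $\bigl||x_\alpha|-|x|\bigr|\le|x_\alpha-x|$, and your reductions to standard order-convergence facts (subnets, linearity, the squeeze property, all pushed through multiplication by $u$) are precisely that argument. No issue there.

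The genuine gap is in (iv), and you correctly isolate the delicate point but then close it with a false claim. From $|x-y|u\le|x_\alpha-x|u+|x_\alpha-y|u\oc 0$ one does get $|x-y|u=0$ for every $u\in E_+$, hence $|x-y|^2=0$. But your assertion that in an Archimedean $f$-algebra ``$a\ge 0$, $a^2=0$ implies $a=0$'' is exactly the statement that the algebra is \emph{semiprime}, and Archimedean $f$-algebras need not be semiprime: equip any Archimedean vector lattice with the zero multiplication $xy=0$ for all $x,y$; every Riesz-algebra and $f$-algebra axiom holds, every element squares to zero, and every net $mo$-converges to every point, so (iv) fails outright in that example. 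Your parenthetical justification (``$a\wedge a=a$ and squaring annihilates, so $a$ is disjoint from itself'') establishes nothing: $a\wedge a=a$ is a tautology, and $a^2=0$ yields no disjointness of $a$ from itself. Uniqueness of $mo$-limits is only salvageable under an extra hypothesis --- $E$ semiprime, or $E$ unital (unital Archimedean $f$-algebras are semiprime, and then taking $u=e$ works, as you note). In fairness, the paper's own one-line justification of (iv) suffers from the identical defect, since it tacitly assumes that $|x-y|u=0$ for all $u\in E_+$ forces $x=y$; but as written, your proof of (iv) rests on a step that fails in general.
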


Recall that an order complete vector lattice $E^\delta$ is said to be
an order completion of the vector lattice $E$ whenever $E$ is Riesz isomorphic to a majorizing order dense vector lattice subspace of $E^\delta$. Every Archimedean Riesz space has a unique order completion; see Theorem 2.24 \cite{ABPO}.
\begin{proposition}
Let $(x_\alpha)$ be a net in an $f$-algebra $E$. Then $x_\alpha\fc 0$ in $E$ iff $x_\alpha\fc 0$ in the order completion $E^\delta$ of $E$.
\end{proposition}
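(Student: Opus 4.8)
The plan is to transfer $mo$-convergence between $E$ and $E^{\delta}$ one ``test vector'' at a time, exploiting that $E$ sits inside $E^{\delta}$ as a \emph{majorizing} and \emph{order dense}, hence \emph{regular}, sub-$f$-algebra (throughout, $E^{\delta}$ is regarded as an $f$-algebra with the unique multiplication extending that of $E$, so that $mo$-convergence in $E^{\delta}$ is meaningful, and the lattice operations as well as products of elements of $E$ are computed identically in $E$ and in $E^{\delta}$; see \cite{ABPO} for the order completion and for the fact that order dense sublattices are regular). Recall that $x_\alpha\fc 0$ in $E^{\delta}$ means $\lvert x_\alpha\rvert v\oc 0$ in $E^{\delta}$ for every $v\in (E^{\delta})_{+}$, while $x_\alpha\fc 0$ in $E$ means $\lvert x_\alpha\rvert u\oc 0$ in $E$ for every $u\in E_{+}$.

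For the implication ``$E\Rightarrow E^{\delta}$'' I would argue as follows. Assume $x_\alpha\fc 0$ in $E$ and fix $v\in(E^{\delta})_{+}$. Since $E$ majorizes $E^{\delta}$, choose $u\in E$ with $v\le u$; then $u\in E_{+}$ and, by hypothesis, $\lvert x_\alpha\rvert u\oc 0$ in $E$. As $E$ is regular in $E^{\delta}$, the decreasing net witnessing this order convergence still decreases to $0$ in $E^{\delta}$, so $\lvert x_\alpha\rvert u\oc 0$ in $E^{\delta}$ too. Now Lemma \ref{basic monotonicity}, applied in the $f$-algebra $E^{\delta}$ with the positive multiplier $\lvert x_\alpha\rvert$, gives $0\le\lvert x_\alpha\rvert v\le\lvert x_\alpha\rvert u$, and squeezing an order-null net yields $\lvert x_\alpha\rvert v\oc 0$ in $E^{\delta}$. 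Since $v$ was arbitrary, $x_\alpha\fc 0$ in $E^{\delta}$.

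The converse ``$E^{\delta}\Rightarrow E$'' is the substantial part. Fix $u\in E_{+}$ and set $z_\alpha:=\lvert x_\alpha\rvert u\in E$; by assumption $z_\alpha\oc 0$ in $E^{\delta}$, so there is a net $(y_\beta)\downarrow 0$ in $E^{\delta}$ with $z_\alpha\le y_\beta$ eventually. The task is to replace $(y_\beta)$ by a net lying in $E$. For this I would use the representation $y=\inf\{e\in E:\ e\ge y\}$ valid in $E^{\delta}$ for every $y\in(E^{\delta})_{+}$: picking $p\in E$ with $p\ge y$ and writing $p-y=\sup\{e'\in E:\ 0\le e'\le p-y\}$ by order density gives $y=\inf\{p-e'\}$, where each $p-e'\in E$ majorizes $y$. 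Then I would index a new net by the pairs $(\beta,e)$ with $e\in E$ and $e\ge y_\beta$, ordered by $(\beta_1,e_1)\succeq(\beta_2,e_2)$ iff $\beta_1\ge\beta_2$ and $e_1\le e_2$; this is directed (given two pairs, pass to a common upper bound $\beta_3$ of $\beta_1,\beta_2$ and note $e_1\wedge e_2\in E$ with $e_1\wedge e_2\ge y_{\beta_3}$), the net $w_{(\beta,e)}:=e$ is decreasing, and any lower bound $v\in E$ of it satisfies $v\le\inf\{e\in E:\ e\ge y_\beta\}=y_\beta$ for every $\beta$, hence $v\le 0$; thus $w_{(\beta,e)}\downarrow 0$ in $E$. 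Since $z_\alpha\le y_\beta\le e=w_{(\beta,e)}$ eventually for each index $(\beta,e)$, we obtain $z_\alpha\oc 0$ in $E$, and $u\in E_{+}$ being arbitrary, $x_\alpha\fc 0$ in $E$.

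I expect the main obstacle to be exactly this last ``reflection'' step --- showing that a net drawn from $E$ which is order null in $E^{\delta}$ is already order null in $E$ --- because that is the only place where the concrete structure of the order completion is needed (order density, via the infimum representation, and majorizing, to ensure the sets $\{e\in E:\ e\ge y_\beta\}$ are nonempty and that the constructed dominating net really lies in $E$). The reverse inclusion and the two ``one test vector at a time'' reductions are routine once Lemma \ref{basic monotonicity}, regularity of $E$ in $E^{\delta}$, and the extended $f$-algebra structure of $E^{\delta}$ are in hand.
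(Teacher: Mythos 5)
Your proposal is correct and follows essentially the same route as the paper: the forward direction uses that $E$ majorizes $E^\delta$ together with Lemma \ref{basic monotonicity} to squeeze $\lvert x_\alpha\rvert v$, and the converse reduces to showing that a net drawn from $E$ which is order null in $E^\delta$ is order null in $E$. The only difference is that where the paper simply cites Corollary 2.9 of \cite{GTX} for this transfer of order convergence across the order completion, you prove it from scratch via the representation $y=\inf\{e\in E: e\geq y\}$ and the pair-indexed dominating net, which is a correct, self-contained substitute.
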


\begin{proof}
Assume $x_\alpha\fc 0$ in $E$. Then $\left|x_\alpha\right|u\oc 0$ in $E$ for all $u\in E_+$, and so $\left|x_\alpha\right|u\oc 0$ in $E^\delta$ for all $u\in E_+$; see Corollary 2.9 \cite{GTX}. Now, let's fix $v\in E^\delta_+$. Then there exists $x_v\in E_+$ such that $v\leq x_v$ because $E$ majorizes $E^\delta$. Then we have $\left|x_\alpha\right|v\leq \left|x_\alpha\right|x_v$. From $\left|x_\alpha\right|x_v\oc 0$ in $E^\delta$ it follows that $\left|x_\alpha\right|v\oc 0$ in $E^\delta$, that is, $x_\alpha\fc 0$ in the order completion $E^\delta$ because $v\in E^\delta_+$ is arbitrary.

Conversely, assume $x_\alpha\fc 0$ in $E^\delta$. Then, for all $u\in E^\delta_+$, we have $\left|x_\alpha\right|u\oc 0$ in $E^\delta$. In particular, for all $x\in E_+$, $\left|x_\alpha\right|x\oc 0$ in $E^\delta$. By Corollary 2.9 \cite{GTX}, we get $\left|x_\alpha\right|x\oc 0$ in $E$ for all $x\in E_+$. Hence $x_\alpha\fc$ in $E$. 
\end{proof}

The multiplication in $f$-algebra is $mo$-continuous in the following sense.
\begin{theorem}
Let $E$ be an infinite distributive $f$-algebra, and $(x_\alpha)_{\alpha \in A}$ and $(y_\beta)_{\beta \in B}$ be two nets in $E$. If $x_\alpha\fc x$ and $y_\beta\fc y$ for some $x,y\in E$ and each positive element of $E$ can be written as a multiplication of two positive elements then $x_\alpha y_\beta\fc xy$.
\end{theorem}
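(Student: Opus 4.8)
The plan is to fix $w\in E_+$, use the factorization hypothesis to write $w=w_1w_2$ with $w_1,w_2\in E_+$, and to show $\left|x_\alpha y_\beta-xy\right|w\oc 0$ along the product directed set $A\times B$; since $w$ is arbitrary, this says exactly $x_\alpha y_\beta\fc xy$. The algebraic input is the identity $x_\alpha y_\beta-xy=x_\alpha(y_\beta-y)+(x_\alpha-x)y$. Taking absolute values and using $\left|ab\right|=\left|a\right|\left|b\right|$, the triangle inequality, distributivity, Lemma \ref{basic monotonicity} and commutativity, I would reach
\[
\left|x_\alpha y_\beta-xy\right|w\;\leq\;\bigl(\left|x_\alpha\right|w_1\bigr)\bigl(\left|y_\beta-y\right|w_2\bigr)\;+\;\left|x_\alpha-x\right|\bigl(\left|y\right|w_1w_2\bigr),
\]
and then estimate the two summands separately, closing with the standard facts that a net eventually dominated in modulus by an order-null net is order-null, and that a sum of two order-null nets is order-null.

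The second summand is immediate: $\left|y\right|w_1w_2\in E_+$, so $x_\alpha\fc x$ yields $\left|x_\alpha-x\right|\bigl(\left|y\right|w_1w_2\bigr)\oc 0$ as a net in $\alpha$, hence along $A\times B$. The first summand is the point where something must be done, because the coefficient $\left|x_\alpha\right|$ is not a fixed element and need not be order bounded. Here I would use that $\left|x_\alpha-x\right|w_1\oc 0$ (test $mo$-convergence of $(x_\alpha)$ against $w_1$) and that an order-convergent net is eventually order bounded: there are $p\in E_+$ and $\alpha_0\in A$ with $\left|x_\alpha-x\right|w_1\leq p$ for all $\alpha\geq\alpha_0$, whence $\left|x_\alpha\right|w_1\leq p+\left|x\right|w_1=:q\in E_+$ for $\alpha\geq\alpha_0$. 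Then Lemma \ref{basic monotonicity}, applied with the positive element $\left|y_\beta-y\right|w_2$, gives $\bigl(\left|x_\alpha\right|w_1\bigr)\bigl(\left|y_\beta-y\right|w_2\bigr)\leq q\bigl(\left|y_\beta-y\right|w_2\bigr)=\left|y_\beta-y\right|(qw_2)$ for $\alpha\geq\alpha_0$, and $qw_2\in E_+$ together with $y_\beta\fc y$ gives $\left|y_\beta-y\right|(qw_2)\oc 0$ as a net in $\beta$.

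Combining, for all $(\alpha,\beta)$ with $\alpha\geq\alpha_0$ the quantity $\left|x_\alpha y_\beta-xy\right|w$ is dominated by the sum of the two order-null nets $\left|y_\beta-y\right|(qw_2)$ and $\left|x_\alpha-x\right|\bigl(\left|y\right|w_1w_2\bigr)$, hence is order-null along $A\times B$; as $w\in E_+$ was arbitrary, $x_\alpha y_\beta\fc xy$. The only delicate step is the passage ``$\left|x_\alpha-x\right|w_1\oc 0$ implies eventual domination by a fixed $p\in E_+$'', i.e. that an order-convergent net is eventually order bounded; the rest is bookkeeping over the product index set. The factorization hypothesis is used exactly to let $w_1$ absorb the unbounded factor $\left|x_\alpha\right|$ into the order-bounded element $q$ while keeping the positive factor $w_2$ available for the $(y_\beta)$-estimate; in fact one can also argue directly with $\left|x_\alpha\right|w$ and commutativity, but the split makes the argument most transparent.
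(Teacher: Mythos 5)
Your proof is correct, and on the one genuinely delicate term it takes a different route from the paper. Both arguments start from the same decomposition $x_\alpha y_\beta-xy=x_\alpha(y_\beta-y)+(x_\alpha-x)y$ and both invoke the factorization $u=u_1u_2$ at the same spot, but the paper goes on to split $\left|x_\alpha\right|\leq\left|x_\alpha-x\right|+\left|x\right|$, producing three terms of which the doubly-indexed cross term $\left|x_\alpha-x\right|\left|y_\beta-y\right|u=(\left|x_\alpha-x\right|u_1)(\left|y_\beta-y\right|u_2)$ is controlled by dominating each factor by a decreasing-to-zero net and then proving that the product $z_\gamma z_\xi$ of the two dominating nets decreases to zero --- and it is exactly there, in the computation $\inf(z_{\gamma_0}z_\xi)=z_{\gamma_0}\inf(z_\xi)=0$, that the paper uses the infinite distributivity hypothesis. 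You instead keep the coefficient $\left|x_\alpha\right|$ intact and tame it by the eventual order boundedness of the order-convergent net $\left|x_\alpha-x\right|w_1$, which reduces the doubly-indexed term to the singly-indexed order-null net $\left|y_\beta-y\right|(qw_2)$ with a fixed $q\in E_+$; this is a standard and perfectly valid fact about order convergence. The payoff of your version is that infinite distributivity is never used: your argument proves the theorem for an arbitrary (Archimedean) $f$-algebra satisfying only the factorization assumption, which is a genuine strengthening of the stated result. One small caveat: your closing aside that ``one can also argue directly with $\left|x_\alpha\right|w$ and commutativity'' is not obviously right, since without splitting off $w_1$ you have no fixed positive element against which to test the $mo$-convergence of $(x_\alpha)$ in order to extract an eventual bound on $\left|x_\alpha\right|w_1$; the factorization really is doing work in your argument, just as it does in the paper's.
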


\begin{proof}
Assume $x_\alpha\fc x$ and $y_\beta\fc y$. Then $\left| x_\alpha-x\right|u\oc 0$ and $\left| y_\beta-y\right|u\oc 0$ for every $u\in E_+$. Let's fix $u\in E_+$. So, there exist another two nets $(z_\gamma)_{\gamma\in\Gamma}\downarrow 0$ and $(z_\xi)_{\xi\in\Xi}\downarrow 0$ in $E$ such that, for all $(\gamma,\xi)\in\Gamma\times\Xi$ there are $\alpha_\gamma\in A$ and $\beta_\xi\in B$ with $\left|x_\alpha-x\right|u\leq z_\gamma$ and $\left|y_\beta-y\right|u\leq z_\xi$ for all $\alpha\geq\alpha_\gamma$ and $\beta\geq\beta_\xi$.
	
Next, we show the $mo$-convergence of $(x_\alpha y_\beta)$ to $xy$. By considering the equality $\left|xy\right| =\left|x\right| \left|y\right|$ and Lemma \ref{basic monotonicity}, we have
\begin{eqnarray*}
\left|x_\alpha y_\beta-xy\right|u&=&\left|x_\alpha y_\beta-x_\alpha y+x_\alpha y-xy\right|u\\&\leq& \left|x_\alpha\right|\left|y_\beta-y\right|u+\left| x_\alpha -x\right|\left|y\right|u\\&\leq& \left|x_\alpha-x\right|\left|y_\beta-y\right|u+\left|x\right|\left|y_\beta-y\right|u+\left| x_\alpha -x\right|\left|y\right|u.
\end{eqnarray*}
The second and the third terms in the last inequality both order converge to zero as $\beta\to\infty$ and $\alpha\to \infty$ respectively because of $\left|x\right|u,\left|y\right|u\in E_+$, $x_\alpha\fc x$ and $y_\beta\fc y$. 
	
Now, let's show the convergence of the first term of last inequality. There are two positive elements $u_1,u_2\in E_+$ such that $u=u_1u_2$ because the positive element of $E$ can be written as a multiplication of two positive elements. So, we get $\left|x_\alpha-x\right|\left|y_\beta-y\right|u=(\left|x_\alpha-x\right|u_1)(\left|y_\beta-y\right|u_2)$. Since $(z_\gamma)_{\gamma\in\Gamma}\downarrow 0$ and $(z_\xi)_{\xi\in\Xi}\downarrow 0$, the multiplication $(z_\gamma z_\xi)\downarrow 0$. Indeed, we firstly show that the multiplication is decreasing. For indexes $(\gamma_1,\xi_1)(\gamma_2,\xi_2)\in \Gamma\times\Xi$, we have $z_{\gamma_2}\leq z_{\gamma_1}$ and $z_{\xi_2}\leq z_{\xi_1}$ because both of them are decreasing. Since the nets are positive, it follows from $z_{\xi_2}\leq z_{\xi_1}$ that $z_{\gamma_2}z_{\xi_2}\leq z_{\gamma_2}z_{\xi_1}\leq z_{\gamma_1}z_{\xi_1}$. As a result $(z_\gamma z_\xi)_{(\gamma,\xi)\in\Gamma\times\Xi}\downarrow$. Now, we show that the infimum of multiplication is zero. For a fixed index $\gamma_0$, we have $z_\gamma z_\xi\leq z_{\gamma_0}z_\xi$ for $\gamma\geq \gamma_0$ because $(z_\gamma)$ is decreasing. Thus, we get $\inf(z_\gamma z_\xi)=0$ because of $\inf(z_{\gamma_0}z_\xi)=z_{\gamma_0}\inf(z_\xi)=0$. Therefore, we see $(\left|x_\alpha-x\right|u_1)(\left|y_\beta-y\right|u_2)\oc 0$. Hence, we get $x_\alpha y_\beta\fc xy$.
\end{proof}

The lattice operations in an $f$-algebra are $mo$-continuous in the 
following sense.
\begin{proposition}\label{LO are $mo$-continuous}
Let $(x_\alpha)_{\alpha \in A}$ and $(y_\beta)_{\beta \in B}$ be two nets in an $f$-algebra $E$. If $x_\alpha\fc x$ and $y_\beta\fc y$ then $(x_\alpha\vee y_\beta)_{(\alpha,\beta)\in A\times B} \fc x\vee y$. In particular, $x_\alpha\fc x$ implies $x_\alpha^+\fc x^+$.
\end{proposition}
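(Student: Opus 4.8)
The plan is to reduce the lattice statement to the elementary inequality
$$
\bigl|\,x_\alpha\vee y_\beta - x\vee y\,\bigr| \;\leq\; |x_\alpha - x| + |y_\beta - y|,
$$
which is the standard estimate for the join in any vector lattice (it follows from $|a\vee b - c\vee d|\le |a-c|+|b-d|$). Fixing an arbitrary $u\in E_+$ and multiplying through by $u$, Lemma~\ref{basic monotonicity} gives
$$
\bigl|\,x_\alpha\vee y_\beta - x\vee y\,\bigr|\,u \;\leq\; |x_\alpha - x|\,u \;+\; |y_\beta - y|\,u .
$$
By hypothesis $x_\alpha\fc x$ and $y_\beta\fc y$, so $|x_\alpha - x|\,u\oc 0$ and $|y_\beta - y|\,u\oc 0$; hence the right-hand side, viewed as a net indexed by $A\times B$, order-converges to $0$ (the sum of two $o$-null nets is $o$-null, and each summand is trivially $o$-null when regarded as a net over the product directed set, since one coordinate is a ``dummy''). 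Since $0\le \bigl|\,x_\alpha\vee y_\beta - x\vee y\,\bigr|\,u$ is dominated by an $o$-null net, it too $o$-converges to $0$. As $u\in E_+$ was arbitrary, this is exactly $(x_\alpha\vee y_\beta)_{(\alpha,\beta)\in A\times B}\fc x\vee y$.

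For the ``in particular'' clause, apply the first part with $(y_\beta)$ the constant net equal to $0$ (which $mo$-converges to $0$); then $x_\alpha\vee 0 = x_\alpha^+ \fc x\vee 0 = x^+$, and the product directed set $A\times B$ collapses to $A$. One can also note the dual statements $x_\alpha\wedge y_\beta\fc x\wedge y$ and $x_\alpha^-\fc x^-$ follow identically, using $a\wedge b = -((-a)\vee(-b))$ together with part~(iii) of the preceding lemma, or directly from the analogous inequality for the meet.

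The only point requiring a little care — and the place I would be most careful in writing up — is the bookkeeping with directed sets: the nets $(x_\alpha)$ and $(y_\beta)$ live over different index sets $A$ and $B$, the target net $(x_\alpha\vee y_\beta)$ is indexed by $A\times B$, and the two $o$-null bounding nets must be combined over the same product set before one can invoke ``sum of $o$-null nets is $o$-null.'' This is routine but is the natural spot for an off-by-a-projection error, so I would spell out explicitly that $|x_\alpha-x|\,u$, reindexed over $A\times B$ by ignoring the second coordinate, still $o$-converges to $0$ (its dominating decreasing net is the old one composed with the projection $A\times B\to A$, which is still decreasing to $0$ since the projection is a cofinal monotone map), and similarly for $|y_\beta-y|\,u$.
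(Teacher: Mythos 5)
Your proof is correct and follows essentially the same route as the paper's: both reduce to the Birkhoff-type estimate $\lvert x_\alpha\vee y_\beta - x\vee y\rvert\le\lvert x_\alpha-x\rvert+\lvert y_\beta-y\rvert$, multiply by a fixed $u\in E_+$ via Lemma~\ref{basic monotonicity}, and dominate by a sum of two nets decreasing to zero over the product index set. Your explicit care with reindexing over $A\times B$ is a welcome refinement of a point the paper leaves implicit, but it is not a different argument.
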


\begin{proof}
Assume $x_\alpha\fc x$ and $y_\beta\fc y$. Then there exist two nets $(z_\gamma)_{\gamma\in\Gamma}$ and $(w_\lambda)_{\lambda\in\Lambda}$ in $E$ satisfying $z_\gamma\downarrow 0$ and $w_\lambda\downarrow 0$, and for all $(\gamma,\lambda)\in\Gamma\times\Lambda$ there are $\alpha_\gamma\in A$ and $\beta_\lambda\in B$ such that $\left|x_\alpha-x\right|u\leq z_\gamma$ and $\left|y_\beta-y\right|u\leq w_\lambda$ for all $\alpha\geq\alpha_\gamma$ and $\beta\geq\beta_\lambda$ and for every $u\in E_+$. It follows from the inequality $|a\vee b-a\vee c|\leq |b-c|$ in vector lattices that 	
\begin{equation*}
\begin{split}
\left|x_\alpha \vee y_\beta - x\vee y\right|u&\leq \left|x_\alpha \vee y_\beta -x_\alpha \vee y\right|u+\left|x_\alpha \vee y- x\vee y\right|u\\ &\leq \left|y_\beta -y\right|u+\left|x_\alpha-x\right|u\leq w_\lambda+z_\gamma
\end{split}
\end{equation*}
for all $\alpha\geq\alpha_\gamma$ and $\beta\geq\beta_\lambda$ and for every $u\in E_+$. Since $(w_\lambda+z_\gamma)\downarrow 0$, $\left|x_\alpha \vee y_\beta - x\vee y\right|u$ order converges to $0$ for all $u\in E_+$. That is, $(x_\alpha\vee y_\beta)_{(\alpha,\beta)\in A\times B} \fc x\vee y$.
\end{proof}

\begin{lemma}\label{mo convergence positive}
Let $(x_\alpha)$ be a net in an $f$-algebra $E$. Then 
\begin{enumerate}
\item[(i)] $0\leq x_\alpha\fc x$ implies $x\in E_+$.
\item[(ii)] if $(x_\alpha)$ is monotone and $x_\alpha\fc x$ then implies $x_\alpha\oc x$.
\end{enumerate}
\end{lemma}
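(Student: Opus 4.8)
The plan is to establish (i) first and then deduce (ii) from it with two applications of (i).

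For (i), I would show that the negative part $x^-$ vanishes. Since $\bigl|(-x_\alpha)-(-x)\bigr|u=|x_\alpha-x|u\oc 0$ for every $u\in E_+$, the net $(-x_\alpha)$ $mo$-converges to $-x$, so the final clause of Proposition~\ref{LO are $mo$-continuous}, applied to $(-x_\alpha)$, gives $(-x_\alpha)^+\fc(-x)^+$, that is, $x_\alpha^-\fc x^-$. But $0\le x_\alpha$ forces $x_\alpha^-=0$ for every $\alpha$, so the constant zero net $mo$-converges to $x^-$; as it also $mo$-converges to $0$, the uniqueness of $mo$-limits established in the first lemma of this section forces $x^-=0$, hence $x=x^+\in E_+$.

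For (ii), treat first the case $x_\alpha\uparrow$ and show $x=\sup_\alpha x_\alpha$. Fix an index $\alpha_0$; on the tail $\{\alpha:\alpha\ge\alpha_0\}$, which is cofinal so that $mo$-convergence is retained, the net $x_\alpha-x_{\alpha_0}$ is positive and $mo$-converges to $x-x_{\alpha_0}$ because $\bigl|(x_\alpha-x_{\alpha_0})-(x-x_{\alpha_0})\bigr|u=|x_\alpha-x|u\oc 0$; then part (i) yields $x-x_{\alpha_0}\in E_+$, i.e.\ $x_{\alpha_0}\le x$. As $\alpha_0$ was arbitrary, $x$ is an upper bound of $(x_\alpha)$. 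If $y$ is any upper bound, then $y-x_\alpha\ge 0$ for all $\alpha$ and $y-x_\alpha\fc y-x$, so part (i) gives $y-x\in E_+$, i.e.\ $x\le y$; hence $x=\sup_\alpha x_\alpha$, that is, $x_\alpha\uparrow x$, and therefore $x_\alpha\oc x$. If instead $x_\alpha\downarrow$, apply the case just settled to $(-x_\alpha)$, which increases and $mo$-converges to $-x$, to get $-x_\alpha\oc-x$, whence $x_\alpha\oc x$ since $|(-x_\alpha)-(-x)|=|x_\alpha-x|$.

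The content lies entirely in (i); once it is in hand, (ii) is merely two invocations of (i) together with the routine reduction of a monotone net to an increasing one, so the step to get right is (i). The delicate point there is that one should not try to deduce $x^-=0$ directly from something like $x^-u=0$ for all $u\in E_+$ --- that would call for additional structure such as a multiplicative unit --- but instead route through the $mo$-continuity of $x\mapsto x^-$ and the uniqueness of $mo$-limits, both already established in this section. A minor bookkeeping matter in (ii) is that $x_\alpha-x_{\alpha_0}$ is positive only on a tail of the index set, so (i) must be applied to the corresponding subnet, using that $mo$-convergence passes to subnets.
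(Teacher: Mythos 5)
Your proof is correct and follows essentially the same route as the paper's: part (i) is obtained from the $mo$-continuity of the lattice operations together with uniqueness of $mo$-limits (you pass through $x^-$ where the paper passes through $x^+$, an immaterial difference), and part (ii) is deduced by applying (i) to the tails $x_\alpha-x_{\alpha_0}$ and to $y-x_\alpha$ for an arbitrary upper bound $y$. Your explicit attention to the tail/subnet issue and to the decreasing case is slightly more careful than the paper's write-up, but the argument is the same.
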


\begin{proof}
$(i)$ Assume $0\leq x_\alpha\fc x$. Then we have $x_\alpha=x_\alpha^+\fc x^+=0$ by Proposition \ref{LO are $mo$-continuous}. Hence, we get $x\in E_+$.
	
$(ii)$ We show that $x_\alpha\uparrow$ and $x_\alpha\fc x$ implies $x_\alpha\uparrow x$. Fix an index $\alpha$. Then we have $x_\beta-x_\alpha\in X_+$ for $\beta\ge\alpha$. By $(i)$, $x_\beta-x_\alpha\fc x-x_\alpha\in X_+$. Therefore, $x\geq x_\alpha$ for any $\alpha$. Since $\alpha$ is arbitrary, then $x$ is an upper bound of $(x_\alpha)$. Assume $y$ is another upper bound of $(x_\alpha)$, i.e., $y\geq x_\alpha$ for all $\alpha$. So, $y-x_\alpha\fc y-x\in X_+$, or $y\ge x$, and so $x_\alpha \uparrow x$.
\end{proof}

The following simple observation is useful in its own right.
\begin{proposition}
Every disjoint decreasing sequence in an $f$-algebra $mo$-converges to zero.
\end{proposition}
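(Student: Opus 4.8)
The plan is to observe that a disjoint decreasing sequence is necessarily eventually zero, which makes the $mo$-convergence to $0$ immediate. So let $(x_n)$ be a disjoint decreasing sequence in the $f$-algebra $E$, and the first task is to show that $x_n=0$ for all $n\geq 2$.

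I would argue in two steps, first on positive parts and then on absolute values. Since $x\mapsto x^+$ is increasing, $x_{n+1}\leq x_n$ gives $x_{n+1}^+\leq x_n^+$, so $(x_n^+)$ is decreasing. Disjointness yields $x_1^+\wedge x_2^+\leq\lvert x_1\rvert\wedge\lvert x_2\rvert=0$, while $x_2^+\leq x_1^+$ gives $x_1^+\wedge x_2^+=x_2^+$; hence $x_2^+=0$, i.e.\ $x_2\leq 0$, and consequently $x_n\leq x_2\leq 0$, so that $\lvert x_n\rvert=-x_n$ for every $n\geq 2$. Now repeat the idea for the absolute values: for $n\geq 2$ the inequality $x_{n+1}\leq x_n\leq 0$ gives $\lvert x_n\rvert=-x_n\leq -x_{n+1}=\lvert x_{n+1}\rvert$, so $(\lvert x_n\rvert)_{n\geq 2}$ is increasing; combined with $\lvert x_n\rvert\wedge\lvert x_{n+1}\rvert=0$ this forces $\lvert x_n\rvert=\lvert x_n\rvert\wedge\lvert x_{n+1}\rvert=0$. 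Hence $x_n=0$ for all $n\geq 2$.

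To finish, fix $u\in E_+$. Then $\lvert x_n-0\rvert u=\lvert x_n\rvert u=0$ for all $n\geq 2$, so the sequence $(\lvert x_n\rvert u)_n$ is eventually the constant $0$; dominating its terms by $\tfrac{1}{k}\lvert x_1\rvert u$, which decreases to $0$ since $E$ is Archimedean, gives $\lvert x_n\rvert u\oc 0$, and as $u\in E_+$ was arbitrary we conclude $x_n\fc 0$. I do not expect a genuine obstacle here: the essential point, and the one worth isolating, is that ``disjoint'' together with ``decreasing'' is a very rigid pair of hypotheses that collapses the sequence to the form $(x_1,0,0,\dots)$ with $x_1\in E_+$; the remaining analytic content is merely that an eventually-zero sequence is order-null, which is exactly where the Archimedean property of $E$ enters.
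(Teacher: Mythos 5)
Your proof is correct, and it takes a genuinely different route from the paper's. The paper does not notice that the two hypotheses collapse the sequence: it instead invokes Corollary 3.6 of Gao--Troitsky--Xanthos (every disjoint sequence is $uo$-null), uses Theorem 142.1(iii) of Zaanen to see that $(\lvert x_n\rvert u)$ is again disjoint for each $u\in E_+$, hence $\lvert x_n\rvert u\uoc 0$, and then upgrades this to $\lvert x_n\rvert u\oc 0$ by testing the $uo$-convergence against $w=\lvert x_{n_0}\rvert u$ and using the domination $\lvert x_n\rvert u\leq\lvert x_{n_0}\rvert u$ for $n\geq n_0$. Your argument is entirely elementary and self-contained, and it exposes something the paper's proof obscures: under the stated hypotheses the sequence is literally $(x_1,0,0,\dots)$ with $x_1\geq 0$, so the proposition is degenerate as formulated (note that the same collapse occurs if one instead assumes $\lvert x_n\rvert$ is monotone in either direction, since $\lvert x_{n}\rvert\wedge\lvert x_{n+1}\rvert$ then equals one of the two terms). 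The paper's route, by contrast, is the one that carries actual content and generalizes to the situations one presumably cares about (disjoint sequences without any monotonicity, where $uo$-nullity is the substantive conclusion); its use of the decreasing hypothesis is confined to the single domination step. One tiny remark on your last paragraph: the detour through $\tfrac{1}{k}\lvert x_1\rvert u$ and the Archimedean property is unnecessary, since an eventually-zero sequence is order null with the constant dominating net $y_\beta\equiv 0$; but this costs nothing, as the paper assumes all lattices are Archimedean anyway.
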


\begin{proof}
Suppose $(x_n)$ is a disjoint and decreasing sequence in an $f$-algebra $E$. So, $\left|x_n\right|u$ is also a disjoint sequence in $E$ for all $u\in E_+$; see Theorem 142.1(iii) \cite{Za}. Fix $u\in E_+$, by Corollary 3.6 \cite{GTX}, we have   $\left|x_n\right|u\uoc 0$ in $E$. So, $\left|x_n\right|u\wedge w\oc 0$ in $E$ for all $w\in E_+$. Thus, in particular for fixed $n_0$, taking $w$ as $\left|x_{n_0}\right|u$. Then, for all $n\geq n_0$, we get 
$$
\left|x_n\right|u=\left|x_n\right|u\wedge \left|x_{n_0}\right|u=\left|x_n\right|u\wedge w\oc 0
$$
because of $\left|x_n\right|u\leq \left|x_{n_0}\right|u$. Therefore, $x_n \fc 0$ in $E$.
\end{proof}

For the next two facts, observe the following fact. Let $E$ be a vector lattice, $I$ be an order ideal of $E$ and $(x_\alpha)$ be a net in $I$. If $x_\alpha\oc x$ in $I$ then $x_\alpha\oc x$ in $E$. Conversely, if $(x_\alpha)$ is order bounded in $I$ and $x_\alpha\oc x$ in $E$ then $x_\alpha\oc x$ in $I$.
\begin{proposition}
Let $E$ be an $f$-algebra, $B$ be a projection band of $E$ and $P_B$ be the corresponding band projection. If $x_\alpha\fc x$ in $E$ then $P_B(x_\alpha)\fc P_B(x)$ in both $E$ and $B$.
\end{proposition}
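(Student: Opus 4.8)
The plan is to reduce the statement to the defining inequality of $mo$-convergence together with three standard properties of a band projection $P_B$: it is linear and positive, it is a lattice homomorphism (so $|P_B z|=P_B|z|$ and $0\le P_B y\le y$ for $y\ge 0$), and it fixes every element of its range $B$. The only $f$-algebra-specific ingredient needed is that, in an $f$-algebra, every band is a ring ideal.

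First I would treat the convergence in $E$. Since $P_B$ is linear and a lattice homomorphism, $|P_B(x_\alpha)-P_B(x)|=|P_B(x_\alpha-x)|=P_B|x_\alpha-x|$, and $0\le P_B|x_\alpha-x|\le |x_\alpha-x|$. By Lemma \ref{basic monotonicity}, for every $u\in E_+$ we get
$$
0\le (P_B|x_\alpha-x|)\,u\le |x_\alpha-x|\,u .
$$
The right-hand side $o$-converges to $0$ by hypothesis, and a positive net dominated (on the same index set) by an $o$-null net is itself $o$-null, which is immediate from the definition of $o$-convergence used here. Hence $|P_B(x_\alpha)-P_B(x)|u\oc 0$ in $E$ for all $u\in E_+$, i.e.\ $P_B(x_\alpha)\fc P_B(x)$ in $E$.

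For the convergence in $B$, I would first observe that $B$ is again an Archimedean $f$-algebra, so that $mo$-convergence in $B$ is meaningful: in an $f$-algebra the $f$-axiom forces every band to be a ring ideal, because if $b\in B_+$, $z\in E_+$ and $d\in B^d_+$ then $b\wedge d=0$, hence $(bz)\wedge d=0$, so $bz\in B^{dd}=B$; thus $B$ is closed under multiplication and inherits the structure. Now fix $u\in B_+\subseteq E_+$ and put $z_\alpha:=(P_B|x_\alpha-x|)\,u$. By the first part $z_\alpha\oc 0$ in $E$, and each $z_\alpha\in B$ since $P_B|x_\alpha-x|\in B$ and $B$ is a ring ideal. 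From $o$-convergence in $E$ there are an index $\alpha_0$ and $y\in E_+$ with $z_\alpha\le y$ for $\alpha\ge\alpha_0$; applying $P_B$ (which fixes $z_\alpha$ and is monotone) gives $z_\alpha\le P_B y\in B$ for $\alpha\ge\alpha_0$. So a cofinal tail of $(z_\alpha)$ is order bounded in $B$, and by the observation stated just before the proposition $z_\alpha\oc 0$ in $B$. As $u\in B_+$ is arbitrary, $P_B(x_\alpha)\fc P_B(x)$ in $B$.

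The crux of the argument is the fact that $B$ is a ring ideal: this is precisely what returns the products $(P_B|x_\alpha-x|)u$ to $B$ and lets the order-ideal observation be applied; everything else is a routine squeeze argument. The only point requiring a little care is that this observation is phrased for an order bounded net, whereas here we only obtain that a tail of $(z_\alpha)$ is order bounded in $B$ — but this is harmless, since order limits are unaffected by passing to a cofinal tail.
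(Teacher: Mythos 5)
Your proposal is correct and follows essentially the same route as the paper: the identity $\lvert P_B(x_\alpha)-P_B(x)\rvert=P_B\lvert x_\alpha-x\rvert\leq\lvert x_\alpha-x\rvert$ plus Lemma \ref{basic monotonicity} and a squeeze. In fact you supply more detail than the paper, which dismisses the convergence in $B$ with ``it follows easily'': your verification that $B$ is a ring ideal (hence a sub-$f$-algebra in which $mo$-convergence makes sense) and your use of the order-ideal observation on an order-bounded tail are exactly the missing details, and they are carried out correctly.
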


\begin{proof}
It is known that $P_B$ is a lattice homomorphism and $0\leq P_B\leq I$. It follows from $\lvert P_B(x_\alpha)-P_B(x)\rvert=P_B\lvert x_\alpha-x\rvert\leq \lvert x_\alpha-x\rvert$ that $\lvert P_B(x_\alpha)-P_B(x)\rvert u\leq \lvert x_\alpha-x\rvert u$ for all $u\in E_+$. Then it follows easily that $P_B(x_\alpha)\fc P_B(x)$ in both X and B.
\end{proof}

\begin{theorem}\label{ideal iff vector lattice}
Let $E$ be an $f$-algebra and $I$ be an order ideal and sub-$f$-algebra of $E$. For an order bounded net $(x_\alpha)$ in $I$, $x_\alpha\fc 0$ in $I$ iff $x_\alpha\fc 0$ in $E$.
\end{theorem}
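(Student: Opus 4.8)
The plan is to derive both implications from the observation on order ideals stated just above, combined with a passage to the order completion $E^\delta$ as in the preceding Proposition. Throughout, I fix an order bound $w\in I_+$ of $(x_\alpha)$ in $I$, so $|x_\alpha|\le w$ for every $\alpha$.

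First the easy implication. Suppose $x_\alpha\fc 0$ in $E$ and fix $u\in I_+\subseteq E_+$; then $|x_\alpha|u\oc 0$ in $E$. Since $x_\alpha,u\in I$ and $I$ is a sub-$f$-algebra, $(|x_\alpha|u)$ is a net in $I$, and it is order bounded in $I$ by $wu\in I$. Hence the observation above gives $|x_\alpha|u\oc 0$ in $I$, and as $u\in I_+$ was arbitrary, $x_\alpha\fc 0$ in $I$.

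For the converse, suppose $x_\alpha\fc 0$ in $I$ and fix $v\in E_+$; the goal is $|x_\alpha|v\oc 0$ in $E$. For $n\in\mathbb{N}$ I set $v_n:=v\wedge nw$, which lies in $I_+$ because $I$ is an ideal and $nw\in I$; thus $|x_\alpha|v_n\oc 0$ in $I$, hence in $E$ by the observation above, hence in $E^\delta$ by Corollary 2.9 \cite{GTX}. Using $v-v_n=(v-nw)^+\ge 0$, $|x_\alpha|\le w$ and Lemma \ref{basic monotonicity},
$$0\le |x_\alpha|v-|x_\alpha|v_n=|x_\alpha|(v-nw)^+\le w(v-nw)^+=:s_n,$$
and $(s_n)$ decreases in $E_+$. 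I would then argue that $s_n\downarrow 0$ in $E^\delta$: in $E^\delta$ (order complete, and again an $f$-algebra) multiplication by $w$ is order continuous, so $\sup_n wv_n=w\bar v$ with $\bar v:=\sup_n v_n$, and since $s_n=wv-wv_n$ this yields $\inf_n s_n=w(v-\bar v)$; but $v-\bar v$ is disjoint from $w$ in $E^\delta$, and disjoint elements of an $f$-algebra have product $0$, so $\inf_n s_n=0$. Finally, from $0\le |x_\alpha|v\le |x_\alpha|v_n+s_n$ for all $\alpha,n$, with $(|x_\alpha|v)$ order bounded by $wv$, I take the order limit superior over $\alpha$ in $E^\delta$: for each fixed $n$, $\limsup_\alpha |x_\alpha|v\le\limsup_\alpha |x_\alpha|v_n+s_n=s_n$, hence $\limsup_\alpha |x_\alpha|v\le\inf_n s_n=0$, so $|x_\alpha|v\oc 0$ in $E^\delta$. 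Since $|x_\alpha|v\in E$ is order bounded in $E$, Corollary 2.9 \cite{GTX} transfers this back, giving $|x_\alpha|v\oc 0$ in $E$; as $v\in E_+$ was arbitrary, $x_\alpha\fc 0$ in $E$.

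The hard part is this converse direction, precisely the test vectors $v\in E_+\setminus I$: $mo$-convergence in $I$ only directly controls $|x_\alpha|u$ for $u\in I_+$, and the missing ingredient is that the tail $w(v-nw)^+$ tends to $0$. That becomes transparent in $E^\delta$, where $v$ splits as its component $\bar v$ in the band generated by $w$ plus a part disjoint from $w$, and the latter is annihilated upon multiplication by $w$. The remaining points to check are routine but worth stating: the Dedekind completion of an Archimedean $f$-algebra carries a (unique) $f$-algebra structure, multiplication by a positive element is order continuous on it (so monotone nets behave well regardless of the ``infinite distributive'' hypothesis), and the elementary order-$\limsup$ calculus used in the last step is valid in the order complete space $E^\delta$.
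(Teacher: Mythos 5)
Your argument is correct, but it is genuinely different from the paper's. The forward direction coincides with the paper's (both rest on the observation about order bounded nets in an ideal). For the converse, the paper never touches the order bound: it notes via Theorem 142.1(iv) of \cite{Za} that $x_\alpha w=0$ for every $w\in I^d$, concludes $\lvert x_\alpha\rvert z\oc 0$ in $E$ for every $z\in (I\oplus I^d)_+$, and then passes to an arbitrary $v\in E_+$ using order density of $I\oplus I^d$ in $E$ (its final step even asks for $u\in I\oplus I^d$ with $v\le u$, i.e.\ reads density as majorization, which is the weakest point of that argument). You instead exploit the bound $w\in I_+$: truncating $v$ to $v\wedge nw$ keeps all test vectors inside $I$, and the error $s_n=w(v-nw)^+$ is annihilated through the band decomposition of $v$ along $w$ in $E^\delta$. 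What your route buys is that no disjoint-complement bookkeeping is needed and the density step is replaced by a clean $\limsup$ computation; what it costs is the two structural imports you flag: that $E^\delta$ carries an $f$-algebra multiplication extending that of $E$, and that multiplication by a positive element is order continuous on $E^\delta$. Both are true (multiplication by a fixed positive element is an orthomorphism, and orthomorphisms on Archimedean vector lattices are order continuous, Theorem 2.44 of \cite{ABPO}; the $f$-algebra structure of the Dedekind completion is likewise known), but they lie outside this paper's toolkit, which even treats order continuity of the multiplication as an extra hypothesis (``infinite distributive''), not assumed in this theorem. You can avoid both imports with an elementary estimate: since $(v-nw)^+\wedge(nw-v)^+=0$, their product vanishes, so $(nw)(v-nw)^+=(nw\wedge v)(v-nw)^+\le v^2$ and hence $s_n\le\frac{1}{n}v^2\downarrow 0$ in $E$ by the Archimedean property --- precisely the trick the paper itself uses (via Theorem 2.57 of \cite{ABPO}) in the theorem that follows this one; after that, your $\limsup$ argument needs $E^\delta$ only as an order complete vector lattice, and the transfer back to $E$ by Corollary 2.9 of \cite{GTX} is the same device the paper already employs. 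Note finally that your converse genuinely uses order boundedness of $(x_\alpha)$ in $I$ (the bound $w\in I_+$), whereas the paper's converse does not; this is consistent with the stated hypothesis, which is in any case needed for the forward direction.
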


\begin{proof}
Suppose $x_\alpha\fc 0$ in $E$. Then for any $u\in I_+$, we have $\lvert x_\alpha\rvert u\oc 0$ in $E$. So, the preceding remark implies $\lvert x_\alpha\rvert u\oc 0$ in $I$ because $\lvert x_\alpha\rvert u$ is order bounded in $I$ . Therefore, we get $x_\alpha\fc 0$ in $I$.
	
Conversely, assume that $(x_\alpha)$ $mo$-converges to zero in $I$. For any $u\in I_+$, we have $\lvert x_\alpha\rvert u\oc 0$ in $I$, and so in $E$. Then, by applying Theorem 142.1(iv) \cite{Za}, we have $x_\alpha w=0$ for all $w\in I^d=\{x\in E:x\perp y \ for \ all \ y\in I\}$ and for each $\alpha$ because $(x_\alpha)$ in $I$. For any $u\in I_+$ and each $0\leq w\in I^d$, it follows that 
$$
\lvert x_\alpha\rvert (u+w)=\lvert x_\alpha\rvert u+\lvert x_\alpha\rvert w=\lvert x_\alpha\rvert u\oc 0
$$
in $E$. So that, for each $z\in (I\oplus I^d)_+$, we get $\lvert x_\alpha\rvert z\oc 0$ in $E$. It is known that $I\oplus I^d$ is order dense in $E$; see Theorem 1.36 \cite{ABPO}. Fix $v\in E_+$. Then there exists some $u\in (I\oplus I^d)$ such that $v\leq u$. Thus, we have $\lvert x_\alpha\rvert v \leq \lvert x_\alpha\rvert u\oc 0$ in $E$. Therefore, $\lvert x_\alpha\rvert v\oc 0$, and so $x_\alpha\fc 0$ in $E$.
\end{proof} 

The following proposition extends Theorem 3.8 \cite{AAyd} to the general setting.
\begin{theorem}
Let $E$ be an infinite distributive $f$-algebra with a unit $e$ and $(x_n)\downarrow$ be a sequence in $E$. Then $x_n\fc 0$ iff $\left|x_n\right|(u\wedge e)\oc 0$ for all $u\in E_+$.
\end{theorem}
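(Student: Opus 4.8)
The plan is to handle the two implications separately, with essentially all of the work on the converse — and even there the decisive point is a single well-chosen instance of the hypothesis. For the forward implication, suppose $x_n\fc 0$. By definition $\lvert x_n\rvert w\oc 0$ for every $w\in E_+$; since $e$ is a positive element and $E_+$ is stable under $\wedge$, we have $u\wedge e\in E_+$ for each $u\in E_+$, so taking $w=u\wedge e$ gives $\lvert x_n\rvert(u\wedge e)\oc 0$ for all $u\in E_+$. This direction uses neither the unit, nor infinite distributivity, nor monotonicity.

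For the converse, the observation I would exploit is that, because $e$ is the multiplicative unit, the instance $u=e$ of the hypothesis already carries everything: $e\wedge e=e$ and $\lvert x_n\rvert e=\lvert x_n\rvert$, so the assumption forces $\lvert x_n\rvert\oc 0$, that is, $x_n\oc 0$. (If one wishes, the decreasing hypothesis lets us upgrade this to $x_n\downarrow 0$, since a decreasing net that order-converges to $0$ has infimum $0$.) Now $x_n\oc 0$ is order convergence to $0$ and $E$ is an infinite distributive $f$-algebra, so Remark \ref{order con iff f-conv} — order convergence implies $mo$-convergence in such algebras — yields $x_n\fc 0$. Equivalently, one can spell this out by hand: choose a net $(z_\beta)\downarrow 0$ witnessing $\lvert x_n\rvert\oc 0$; then for fixed $v\in E_+$, Lemma \ref{basic monotonicity} gives $\lvert x_n\rvert v\le z_\beta v$ eventually and shows $(z_\beta v)$ is decreasing, while infinite distributivity gives $\inf_\beta(z_\beta v)=v\inf_\beta z_\beta=0$; hence $z_\beta v\downarrow 0$ and $\lvert x_n\rvert v\oc 0$, and as $v\in E_+$ is arbitrary, $x_n\fc 0$.

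I do not expect a genuine obstacle: the content is simply that the multiplicative unit collapses the family $\{\,u\wedge e:u\in E_+\,\}$ of truncated multipliers to something containing $e$ itself, after which the already-established passage ``order convergence $\Rightarrow$ $mo$-convergence'' closes the argument. The one step deserving care is ``$\lvert x_n\rvert\oc 0\Rightarrow\lvert x_n\rvert v\oc 0$ for all $v\in E_+$'', which really does require infinite distributivity (equivalently, order continuity of multiplication by $v$) and is precisely what is packaged in Remark \ref{order con iff f-conv}. If one instead refused the shortcut $u=e$ and wanted to use the hypothesis only through multipliers of the form $u\wedge e$, the work would move to a Freudenthal-type approximation, writing a general $v\in E_+$ as $v=\sup_m(v\wedge me)$ with $v\wedge me=m\bigl((m^{-1}v)\wedge e\bigr)$ and passing to the supremum; I would avoid that route in favour of the clean argument above.
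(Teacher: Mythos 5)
Your proof is correct, but the converse is handled by a genuinely different --- and much shorter --- route than the paper's. The paper never specializes to $u=e$; instead, for a fixed $u\in E_+$ it writes
$\left|x_n\right|u\leq\left|x_n\right|(u-u\wedge ne)+\left|x_n\right|(u\wedge ne)$,
bounds the first summand by $\frac{1}{n}u^2\left|x_n\right|$ via the $f$-algebra inequality $0\leq u-u\wedge ne\leq\frac{1}{n}u^2$ (Theorem 2.57 of \cite{ABPO}), kills that term using the monotonicity of $(x_n)$ together with the Archimedean property, and kills $n\left|x_n\right|(u\wedge e)$ using the hypothesis --- precisely the Freudenthal-type truncation you mention and decline. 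Your observation that the single instance $u=e$ already gives $\left|x_n\right|(e\wedge e)=\left|x_n\right|e=\left|x_n\right|\oc 0$, after which Remark \ref{order con iff f-conv} (order convergence implies $mo$-convergence under infinite distributivity) closes the argument, is valid under the paper's conventions, where ``unit'' means multiplicative unit; it has the additional merit of not using the decreasing hypothesis at all, and it exposes that, with a multiplicative unit and infinite distributivity, the theorem is essentially a restatement of that Remark. What the paper's longer argument buys is robustness: it is the proof one would need if $e$ were merely a weak order unit (the situation in the cited Theorem 3.8 of \cite{AAyd} and in the $uo$-analogue $\left|x_\alpha\right|\wedge e\oc 0$), where $\left|x_n\right|e\neq\left|x_n\right|$ and your shortcut is unavailable; it is also where the monotonicity of $(x_n)$ actually earns its place in the statement. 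As the theorem is stated, though, your proof is complete.
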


\begin{proof}
For the forward implication, assume  $x_n\fc 0$. Hence, $\left|x\right| u\oc 0$ for all $u\in E_+$, and so $\left|x_n\right|(u\wedge e)\leq \left|x_n\right|u\oc 0$ because of $e\in E_+$. Therefore, $\left|x_n\right|(u\wedge e)\oc 0$.

For the reverse implication, fix $u\in E_+$. By applying Theorem 2.57 \cite{ABPO} and Theorem 142.1(i) \cite{Za}, note that
\begin{eqnarray*}
\left| x_n\right| u\leq\left| x_n\right|(u-u\wedge ne)+\left| x_n\right|(u\wedge ne)\leq\frac{1}{n}u^2\left|x_n\right|+n\left|x_n\right|(u\wedge e)
\end{eqnarray*}
Since $(x_n)\downarrow$ and $E$ is Archimedean, we have $\frac{1}{n}u^2\left| x_n\right|\downarrow 0$. Furthermore, it follows from $\left|x_n\right|(u\wedge e)\oc 0$ for each $u\in E_+$ that there exists another sequence $(y_m)_{m\in B}$ satisfying $y_m \downarrow 0$, and for any $m\in B$, there exists $n_m$ such that $\left|x_n\right|(u\wedge e)\leq \frac{1}{n}y_m$, or $n\left|x_n\right|(u\wedge e)\leq y_m$ for all $n \geq n_m$. Hence, we get $n\left|x_n\right|(u\wedge e)\oc 0$. Therefore, we have $\left|x_n\right|u\oc 0$, and so $x_n\fc 0$.
\end{proof}

The $mo$-convergence passes obviously to any sub-$f$-algebra $Y$ of $E$, i.e., for any net $(y_\alpha)$ in $Y$, $y_\alpha\fc0$ in $E$ implies $y_\alpha\fc0$ in $Y$. For the converse, we give the following theorem.
\begin{theorem}\label{$up$-regular}
Let $Y$ be a sub-$f$-algebra of an $f$-algebra $E$ and $(y_\alpha)$ be a net in $Y$. If $y_\alpha\fc 0$ in $Y$ then it $mo$-converges to zero in $E$ for each of the following cases;
\begin{enumerate}
\item[(i)] $Y$ is majorizing in $E$;
\item[(ii)] $Y$ is a projection band in $E$;
\item[(iii)] if, for each $u\in E$, there are element $x,y\in Y$ such that $\left|u-y\right|\leq |x|$.
\end{enumerate}	
\end{theorem}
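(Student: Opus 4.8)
My plan is to extract a common skeleton that reduces all three parts to one transfer step, and then to dispatch the three cases against that skeleton. Assume $y_\alpha\fc 0$ in $Y$, i.e.\ $|y_\alpha|v\oc 0$ in $Y$ for every $v\in Y_+$, and fix $u\in E_+$; I must show $|y_\alpha|u\oc 0$ in $E$. The engine is: (a) choose $v\in Y_+$ with $|y_\alpha|u\le|y_\alpha|v$, which by Lemma \ref{basic monotonicity} only requires $u\le v$; (b) upgrade $|y_\alpha|v\oc 0$ from $Y$ to $E$; (c) squeeze, using that if $0\le z_\alpha\le w_\alpha$ in a vector lattice and $w_\alpha\oc 0$ is witnessed by some $q_\beta\downarrow 0$ with $w_\alpha\le q_\beta$ eventually, then $z_\alpha\le q_\beta$ eventually, so $z_\alpha\oc 0$. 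Step (a) is exactly what each of the three hypotheses buys us, and step (b) — transfer of order convergence from the sublattice $Y$ to $E$ — is the only point with genuine content.

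Case (ii) is the cleanest and bypasses step (a) altogether. A projection band $B=Y$ is an order ideal, so the remark recorded just before Theorem \ref{ideal iff vector lattice} already gives $|y_\alpha|v\oc 0$ in $B\Rightarrow|y_\alpha|v\oc 0$ in $E$. Moreover, writing $u=P_B u+(I-P_B)u$ with $P_B u\in B_+$ and $(I-P_B)u\in(B^d)_+$, the vectors $y_\alpha\in B$ and $(I-P_B)u\in B^d$ (hence also $|y_\alpha|$ and $(I-P_B)u$) are disjoint, so in an $f$-algebra their product vanishes (Theorem 142.1(iv) \cite{Za}, exactly as used in the proof of Theorem \ref{ideal iff vector lattice}). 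Therefore $|y_\alpha|u=|y_\alpha|P_B u$, and since $P_B u\in B_+$ we get $|y_\alpha|u\oc 0$ in $B$, hence in $E$; as $u\in E_+$ was arbitrary, $y_\alpha\fc 0$ in $E$.

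For case (i): if $Y$ is majorizing, then given $u\in E_+$ I pick $v\in Y$ with $u\le v$ and replace $v$ by $v^+\in Y_+$, keeping $u\le v^+$ — this is step (a). Step (b) is the statement that a majorizing sublattice preserves order convergence (equivalently, is a \emph{regular} sublattice: a net in $Y$ decreasing to $0$ in $Y$ also decreases to $0$ in $E$), which I would invoke as a known fact in the same circle of ideas as Corollary 2.9 \cite{GTX}; granting it, $|y_\alpha|v^+\oc 0$ in $E$ and the squeeze $0\le|y_\alpha|u\le|y_\alpha|v^+$ finishes. Case (iii) reduces to (i): given $u\in E$, the hypothesis furnishes $x,y\in Y$ with $|u-y|\le|x|$, whence $u\le y+|x|\in Y$ (using that $Y$ is a sublattice), so $Y$ is in fact majorizing in $E$. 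Alternatively one runs the skeleton directly, since for $u\in E_+$ one has $u\le|x|+|y|$ with $|x|,|y|\in Y_+$, hence $|y_\alpha|u\le|y_\alpha||x|+|y_\alpha||y|$, a sum of two nets order null in $Y$, transferred to $E$ just as in (i).

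The main obstacle is thus step (b) in the majorizing cases (i) and (iii): passing from order convergence in the sublattice $Y$ to order convergence in $E$. For projection bands this is free — it is precisely the ideal remark already in the text — while for a majorizing sublattice it is the nontrivial ``regular sublattice'' fact, and the cleanest write-up simply cites it; everything else (Lemma \ref{basic monotonicity}, the squeeze, and in case (ii) the $f$-algebra identity that disjointness kills products) is routine bookkeeping.
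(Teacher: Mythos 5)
Your skeleton reproduces the paper's proof almost verbatim: in (i) you pick $v\in Y_+$ with $u\le v$ (the paper chooses such a $v$ directly; your $v^{+}$ adjustment is harmless), apply Lemma \ref{basic monotonicity} and squeeze; in (ii) you split $u=u_1+u_2$ along $E=Y\oplus Y^{\bot}$ and kill $|y_\alpha|u_2$ by the $f$-algebra fact that disjoint elements have zero product, exactly as in the paper; in (iii) the paper runs your ``alternative'' direct estimate $|y_\alpha|u\le|y_\alpha||x|+|y_\alpha||y|$, while your reduction of (iii) to (i) via $u\le y+|x|\in Y$ is a small, correct simplification.

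The one substantive issue is the transfer step (b) you isolate in cases (i) and (iii). You propose to cite, as a known fact, that a majorizing sublattice is regular, i.e.\ that $z_\beta\downarrow 0$ in $Y$ forces $z_\beta\downarrow 0$ in $E$. That statement is false in general: let $E$ be the bounded real functions on $[0,1]$ with pointwise order and $Y=C[0,1]$, which majorizes $E$ (constants suffice); the functions $f_n(x)=\max(0,1-nx)$ decrease to $0$ in $C[0,1]$, but their infimum in $E$ is the indicator of $\{0\}$, not $0$. Regularity is guaranteed for ideals (so your case (ii), like the paper's, is sound via the remark before Theorem \ref{ideal iff vector lattice}) and for order dense sublattices, but not by majorizing alone. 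So, as written, your step (b) is unjustified in (i) and (iii) --- although you should know that the paper's own proof makes exactly the same silent leap, asserting $|y_\alpha|v\oc 0$ in $E$ when the hypothesis only provides it in $Y$; your write-up is actually the more transparent of the two, since it names the missing lemma. Closing the gap would require exploiting the full strength of the hypothesis ($|y_\alpha|v\oc 0$ in $Y$ for every $v\in Y_+$) or an additional assumption such as order density of $Y$, which neither your argument nor the paper supplies.
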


\begin{proof}
Assume $(y_\alpha)$ is a net in $Y$ and $y_\alpha\fc 0$ in $Y$. Let's fix $u\in E_+$.
	
$(i)$ Since $Y$ is majorizing in $E$, there exists $v\in Y_+$ such that $u\leq v$. It follows from
$$
0\leq |y_\alpha|u\leq |y_\alpha| v\oc0,
$$
that $|y_\alpha|u\oc 0$ in $E$. That is, $y_\alpha\fc 0$ in $E$.
	
$(ii)$ Since $Y$ is a projection band in $E$, we have $Y=Y^{\bot\bot}$ and $E=Y\oplus Y^{\bot}$. Hence $u=u_1+u_2$ with $u_1\in Y_+$ and $u_2\in Y^{\bot}_+$. Thus, we have $y_\alpha\wedge u_2=0$ because $(y_\alpha)$ in $Y$ and $u_2\in Y^{\bot}$. Hence, by applying Theorem 142.1(iii) \cite{Za}, we see $y_\alpha u=0$ for all index $\alpha$. It follows from
$$
\left|y_\alpha\right|u=\left|y_\alpha\right|(u_1+u_2)=\left|y_\alpha\right|u_1\oc 0
$$ 
tha $\left|y_\alpha\right|u\oc 0$ in $E$. Therefore, $y_\alpha\fc 0$ in $E$.

$(iii)$ For the given $u\in E_+$, there exists elements $x,y\in Y$ with $\left|u-y\right|\leq |x|$. Then 
$$
\left|y_\alpha\right|u\leq \left|y_\alpha\right|\left|u-y\right|+\left|y_\alpha\right|\left|y\right|\leq\left|y_\alpha\right|\left|x\right|+\left|y_\alpha\right|\left|y\right|.
$$
By $mo$-convergence of $(y_\alpha)$ in $Y$, we have $\left|y_\alpha\right|\left|x\right|\oc 0$ and $\left|y_\alpha\right|\left|y\right|\oc 0$, and so $\left|y_\alpha\right|u\oc 0$. That means $y_\alpha\fc 0$ in $E$ because $u$ is arbitrary in $E_+$.
\end{proof}

We continue with some basic notions in $f$-algebra, which are motivated by their analogies from vector lattice theory.
\begin{definition}\label{$mo$-notions}
Let $(x_\alpha)_{\alpha \in A}$ be a net in $f$-algebra $E$. Then 
\begin{enumerate}
\item[(i)]  $(x_\alpha)$ is said to be {\em $mo$-Cauchy} if the net $(x_\alpha-x_{\alpha'})_{(\alpha,\alpha') \in A\times A}$ $mo$-converges to $0$,
\item[(ii)] $E$ is called {\em $mo$-complete} if every $mo$-Cauchy net in $E$ is $mo$-convergent,
\item[(iii)] $E$ is called {\em $mo$-continuous} if $x_\alpha\oc0$ implies $x_\alpha\fc 0$,
\item[(iv)] $E$ is called a {\em $mo$-KB-space} if every order bounded increasing net in $E_+$ is $mo$-convergent.
\end{enumerate}	
\end{definition}

\begin{remark}\label{$mo$-cont-0}
An $f$-algebra $E$ is $mo$-continuous iff $x_\alpha\downarrow 0$ in $E$ implies $x_\alpha\fc 0$. Indeed, the implication is obvious. For the converse, consider a net $x_\alpha\oc 0$. Then there exists a net $z_\beta\downarrow 0$ in $X$ such that, for any $\beta$ there exists $\alpha_\beta$ so that $|x_\alpha|\leq z_\beta$ for all $\alpha\geq\alpha_\beta$. Hence, by $mo$-continuity of $E$, we have $z_\beta\fc 0$, and so $x_\alpha\fc 0$.
\end{remark}

\begin{proposition}
Let $(x_\alpha)$ be a net in an $f$-algebra $E$. If $x_\alpha\fc x$ and $(x_\alpha)$ is an $o$-Cauchy net then $x_\alpha\oc x$. Moreover, if $x_\alpha\fc x$ and $(x_\alpha)$ is $uo$-Cauchy then $x_\alpha\uoc x$.
\end{proposition}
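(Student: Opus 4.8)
The plan is to treat the two assertions in parallel: in each case the Cauchy hypothesis supplies a decreasing net $z_\beta\downarrow 0$ that controls the relevant increments, and one upgrades $mo$-convergence to the stronger convergence by trapping the limit and then invoking the $f$-algebra structure beyond the bare monotonicity of Lemma \ref{basic monotonicity}. For the first assertion, $o$-Cauchyness of $(x_\alpha)$ yields a net $z_\beta\downarrow 0$ in $E$ with $|x_\alpha-x_{\alpha'}|\le z_\beta$ whenever $\alpha,\alpha'\ge\gamma_\beta$. Fix $\beta$, $u\in E_+$ and $\alpha\ge\gamma_\beta$; by the triangle inequality and Lemma \ref{basic monotonicity}, $|x_\alpha-x|u\le z_\beta u+|x_{\alpha'}-x|u$ for all $\alpha'\ge\gamma_\beta$, and since $x_{\alpha'}\fc x$ the element $|x_\alpha-x|u-z_\beta u$ is bounded above by every term of a net decreasing to $0$, hence $|x_\alpha-x|u\le z_\beta u$ for every $u\in E_+$. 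It then remains only to cancel $u$, i.e.\ to show that in an Archimedean $f$-algebra $au\le bu$ for all $u\in E_+$ forces $a\le b$: taking $u=(a-b)^+$, writing $(a-b)(a-b)^+=((a-b)^+)^2-(a-b)^-(a-b)^+$, and using that disjoint elements of an $f$-algebra have zero product (Theorem 142.1(iv) \cite{Za}), one gets $((a-b)^+)^2\le 0$, hence $((a-b)^+)^2=0$, hence $(a-b)^+=0$ by semiprimeness of Archimedean $f$-algebras. Applied with $a=|x_\alpha-x|$, $b=z_\beta$ this gives $|x_\alpha-x|\le z_\beta$ for all $\alpha\ge\gamma_\beta$; as $\beta$ runs and $z_\beta\downarrow 0$ this is precisely $x_\alpha\oc x$.

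For the second assertion the new ingredient is the inequality $(a\wedge v)^2\le av$ for $a,v\in E_+$, obtained from two applications of Lemma \ref{basic monotonicity} (multiply $a\wedge v\le v$ by $a\wedge v$ and $a\wedge v\le a$ by $v$). Taking $a=|x_\alpha-x|$ and the test element $u=v$ in $x_\alpha\fc x$ gives $0\le(|x_\alpha-x|\wedge v)^2\le|x_\alpha-x|v\oc 0$, so $(|x_\alpha-x|\wedge v)^2\oc 0$ in $E$, hence in the order completion $E^\delta$ by Corollary 2.9 \cite{GTX}. Since $E^\delta$ is a Dedekind complete, hence uniformly complete, Archimedean $f$-algebra, every positive element of $E^\delta$ has a square root; writing $(|x_\alpha-x|\wedge v)^2\le p_m$ with $p_m\downarrow 0$ and taking square roots gives $|x_\alpha-x|\wedge v\le\sqrt{p_m}$, and $\sqrt{p_m}\downarrow 0$ because the square root is order preserving and, by semiprimeness, any $q$ with $0\le q\le\sqrt{p_m}$ for all $m$ satisfies $q^2\le p_m$ for all $m$, so $q=0$. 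Thus $|x_\alpha-x|\wedge v\oc 0$ in $E^\delta$, hence in $E$ again by Corollary 2.9 \cite{GTX}; as $v\in E_+$ was arbitrary, $x_\alpha\uoc x$. (This argument in fact does not use the $uo$-Cauchy hypothesis; if one insists on using it, one passes instead to the universal completion $E^u$, where every $uo$-Cauchy net $uo$-converges, obtains $x_\alpha\uoc\hat x$ there, and identifies $\hat x=x$ via uniqueness of $uo$-limits together with the same square-root argument applied in $E^u$.)

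I expect the main obstacle to be exactly the two points where the argument leaves the realm of order monotonicity: the cancellation law $au\le bu\ (\forall u)\Rightarrow a\le b$ in the first assertion, and the availability and order continuity of square roots in the second. Both rest on the same two structural facts about Archimedean $f$-algebras — disjoint elements multiply to zero, and squares of nonzero elements are nonzero (semiprimeness) — so once these are in place what remains is the routine net bookkeeping sketched above, together with the fact, recorded in Corollary 2.9 \cite{GTX}, that order convergence of nets transfers faithfully between $E$ and its completion.
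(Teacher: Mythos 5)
Your reduction of the first claim to the inequality $|x_\alpha-x|u\le z_\beta u$ (valid for every $u\in E_+$ and all $\alpha\ge\gamma_\beta$) is correct, but the step you yourself identify as the crux — the cancellation ``$au\le bu$ for all $u\in E_+$ implies $a\le b$'' — is a genuine gap: you justify it by ``semiprimeness of Archimedean $f$-algebras'', and Archimedean $f$-algebras are \emph{not} semiprime in general. For instance, $\mathbb{R}^2$ with coordinatewise order and the product $(x_1,x_2)(y_1,y_2)=(x_1y_1,0)$ is an Archimedean $f$-algebra in which $(0,2)u\le(0,1)u$ holds for every $u$ (both sides vanish) while $(0,2)\not\le(0,1)$; in general $x^2=0$ does not force $x=0$ unless semiprimeness (e.g.\ the existence of a unit) is assumed, and it is not assumed in the statement. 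The same unproved hypothesis enters your second paragraph twice: the square-root theorem you invoke is a theorem about uniformly complete \emph{semiprime} $f$-algebras (a nonzero positive element with zero square admits no square root), and your argument that $\sqrt{p_m}\downarrow0$ again ends with ``$q^2=0$, hence $q=0$''. That your second argument seems to dispense with the $uo$-Cauchy hypothesis altogether is a symptom of the same issue. So, as written, both halves establish the proposition only under an extra semiprimeness assumption.

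The paper's proof avoids the cancellation problem entirely by never multiplying the Cauchy estimate by $u$: from $|x_\alpha-x_{\alpha'}|\le z_\gamma$ for $\alpha,\alpha'\ge\alpha_\gamma$ it passes to the $mo$-limit in $\alpha'$, using Proposition \ref{LO are $mo$-continuous} to get $|x_\alpha-x_{\alpha'}|\fc|x_\alpha-x|$ together with the positivity of $mo$-limits of positive nets (Lemma \ref{mo convergence positive}) to preserve the inequality, and concludes $|x_\alpha-x|\le z_\gamma$ for all $\alpha\ge\alpha_\gamma$, i.e.\ $x_\alpha\oc x$; the $uo$-part is the same computation applied to $|x_\alpha-x_{\alpha'}|\wedge u\le z_\gamma$. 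If you want to repair your write-up without importing semiprimeness, replace the cancellation step (and the square-root detour through $E^\delta$) by this ``$mo$-limits preserve order'' argument, which uses only results already established in the paper.
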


\begin{proof}
Assume $x_\alpha\fc x$ and $(x_\alpha)$ is an order Cauchy net in $E$. Then $x_\alpha-x_\beta\oc 0$ as $\alpha,\beta\to\infty$. 
Thus, there exists another net $z_\gamma\downarrow 0$ in $E$ such that, for every $\gamma$, there exists $\alpha_\gamma$ satisfying 
$$
|x_\alpha-x_\beta|\leq z_\gamma
$$
for all $\alpha,\beta\geq \alpha_\gamma$. By taking $f$-limit over $\beta$ the above inequality and applying Proposition \ref{LO are $mo$-continuous}, i.e., $\left|x_\alpha-x_\beta\right|\fc \left|x_\alpha-x\right|$, we get $|x_\alpha-x|\leq z_\gamma$ for all $\alpha\geq\alpha_\gamma$. That means $x_\alpha\oc x$. The similar argument can be applied for the $uo$-convergence case, and so the proof is omitted.
\end{proof}

In the case of $mo$-complete in $f$-algebras, we have conditions for $mo$-continuity.
\begin{theorem}\label{of-contchar}
For an $mo$-complete $f$-algebra $E$, the following statements are equivalent:
\begin{enumerate}
\item[(i)] $E$ is $mo$-continuous;
\item[(ii)] if $0\leq x_\alpha\uparrow\leq x$ holds in $E$ then $x_\alpha$ is a $mo$-Cauchy net;
\item[(iii)] $x_\alpha\downarrow 0$ implies $x_\alpha\fc 0$ in $E$.
\end{enumerate}	
\end{theorem}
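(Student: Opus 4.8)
The plan is to run the cycle $(i)\Rightarrow(ii)\Rightarrow(iii)\Rightarrow(i)$, where $(iii)\Rightarrow(i)$ is already contained in Remark \ref{$mo$-cont-0}; so the real work is the two implications $(i)\Rightarrow(ii)$ and $(ii)\Rightarrow(iii)$, and only the latter will actually use $mo$-completeness.

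For $(i)\Rightarrow(ii)$ I would pass to the order completion $E^\delta$ of $E$ (it exists by Theorem 2.24 \cite{ABPO}, with $E$ a majorizing, order dense sublattice). If $0\le x_\alpha\uparrow\le x$ in $E$, then inside $E^\delta$ the supremum $\bar x=\sup_\alpha x_\alpha$ exists, so $x_\alpha\uparrow\bar x$ and $\bar x-x_\alpha\downarrow 0$ in $E^\delta$. For $\alpha,\alpha'\ge\beta$ we get $x_\beta\le x_\alpha,x_{\alpha'}\le\bar x$, hence $\left|x_\alpha-x_{\alpha'}\right|\le\bar x-x_\beta$, and since $(\bar x-x_\beta)\downarrow 0$ in $E^\delta$ this shows $(x_\alpha-x_{\alpha'})_{(\alpha,\alpha')\in A\times A}\oc 0$ in $E^\delta$. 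As this net and its limit $0$ lie in $E$, Corollary 2.9 \cite{GTX} returns $(x_\alpha-x_{\alpha'})_{(\alpha,\alpha')}\oc 0$ in $E$. Applying the $mo$-continuity hypothesis $(i)$ to this order null net then gives $\left|x_\alpha-x_{\alpha'}\right|u\oc 0$ in $E$ for every $u\in E_+$, i.e. $(x_\alpha)$ is $mo$-Cauchy.

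For $(ii)\Rightarrow(iii)$, given $x_\alpha\downarrow 0$ in $E$, I would fix an index $\alpha_0$ and consider $(x_{\alpha_0}-x_\alpha)_{\alpha\ge\alpha_0}$, which satisfies $0\le x_{\alpha_0}-x_\alpha\uparrow\le x_{\alpha_0}$. By $(ii)$ it is $mo$-Cauchy, and by $mo$-completeness of $E$ it $mo$-converges to some $y\in E$; being monotone, Lemma \ref{mo convergence positive}(ii) upgrades this to $x_{\alpha_0}-x_\alpha\uparrow y$, so that $\inf_{\alpha\ge\alpha_0}x_\alpha=x_{\alpha_0}-y$ exists in $E$. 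Since $x_\alpha\downarrow 0$ this infimum is $0$, hence $y=x_{\alpha_0}$, so $x_{\alpha_0}-x_\alpha\fc x_{\alpha_0}$; subtracting the constant $x_{\alpha_0}$ (translation invariance of $mo$-limits) yields $x_\alpha\fc 0$ along $\{\alpha\ge\alpha_0\}$, hence along the whole net since $mo$-convergence depends only on tails. This is $(iii)$, and the cycle closes.

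I expect the main obstacle to be $(i)\Rightarrow(ii)$: because $E$ itself is not assumed order complete, an increasing order bounded net in $E$ has no supremum available in $E$, so the dominating order null net must be manufactured in $E^\delta$ and only afterwards transported back to $E$ via regularity of the embedding, and one must keep all the products $\left|x_\alpha-x_{\alpha'}\right|u$ inside $E$ so that $mo$-continuity of $E$ (not of $E^\delta$, which is not hypothesized) may be invoked. The only care needed in the converse direction is the index bookkeeping—identifying the $mo$-limit of $(x_{\alpha_0}-x_\alpha)$ with $x_{\alpha_0}$ and passing from a tail to the full net—where $mo$-completeness is precisely what pulls the otherwise only formal supremum back into $E$.
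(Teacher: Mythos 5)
Your proof is correct, and two of the three implications coincide with the paper's: $(iii)\Rightarrow(i)$ is Remark \ref{$mo$-cont-0}, and $(ii)\Rightarrow(iii)$ is the same tail argument with $(x_{\alpha_0}-x_\alpha)_{\alpha\ge\alpha_0}$, $mo$-completeness, and Lemma \ref{mo convergence positive} (your identification of the $mo$-limit as $x_{\alpha_0}$ via the infimum of the tail is in fact stated more carefully than in the paper). The only genuine divergence is in $(i)\Rightarrow(ii)$: the paper quotes Lemma 12.8 of \cite{ABPO}, which supplies a net $(y_\beta)$ inside $E$ itself with $(y_\beta-x_\alpha)_{\alpha,\beta}\downarrow 0$, applies Remark \ref{$mo$-cont-0} to that decreasing net, and finishes with the triangle inequality $\left|x_\alpha-x_{\alpha'}\right|\leq\left|x_\alpha-y_\beta\right|+\left|y_\beta-x_{\alpha'}\right|$; you instead manufacture the dominating net $(\bar x-x_\beta)\downarrow 0$ in the order completion $E^\delta$, conclude that the double net $(x_\alpha-x_{\alpha'})$ is $o$-null in $E^\delta$, pull this back to $E$ by Corollary 2.9 of \cite{GTX}, and then apply $mo$-continuity of $E$ directly to the $o$-null double net. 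Both routes are sound: the paper's citation keeps the whole argument inside $E$ and needs no completion, while your version is self-contained modulo the same $E$--$E^\delta$ transfer result that the paper already invokes in its order-completion proposition, and it makes explicit where the order boundedness $x_\alpha\leq x$ is used (existence of $\sup_\alpha x_\alpha$ in $E^\delta$). Your observation that $mo$-completeness is needed only for $(ii)\Rightarrow(iii)$ also matches the paper's proof.
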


\begin{proof} 
$(i)\Rightarrow(ii)$ Consider the increasing and bounded net $0\leq x_\alpha\uparrow\leq x$ in $E$. Then there exists a net $(y_\beta)$ in $E$ such that $(y_\beta-x_\alpha)_{\alpha,\beta}\downarrow 0$; see Lemma 12.8 \cite{ABPO}. Thus, by applying Remark \ref{$mo$-cont-0}, we have $(y_\beta-x_\alpha)_{\alpha,\beta}\fc0$, and so the net $(x_\alpha)$ is $mo$-Cauchy because of $\left|x_\alpha-x_{\alpha'} \right|_{\alpha,\alpha'\in A}\leq\left|x_\alpha-y_\beta\right|+\left|y_\beta-x_{\alpha'}\right|$.
	
$(ii)\Rightarrow(iii)$ Suppose that $x_\alpha\downarrow 0$ in $E$, and fix arbitrary $\alpha_0$. Then we have $x_\alpha\leq x_{\alpha_0}$ for all $\alpha\geq\alpha_0$. Thus we can get $0\leq(x_{\alpha_0}-x_\alpha)_{\alpha\geq\alpha_0}\uparrow\leq x_{\alpha_0}$. So, it follows from $(ii)$ that the net $(x_{\alpha_0}-x_\alpha)_{\alpha\geq\alpha_0}$ is $mo$-Cauchy, i.e., $(x_{\alpha^{'}}-x_\alpha)\fc 0$ as $\alpha_0\le\alpha,\alpha^{'}\to\infty$. Then there exists $x\in E$ satisfying $x_\alpha\fc x$ as $\alpha_0\le\alpha\to\infty$ because $E$ is $mo$-complete. Since $x_\alpha\downarrow$ and $x_\alpha\fc 0$, it follows from Lemma \ref{mo convergence positive} that $x_\alpha\downarrow0$, and so we have $x=0$. Therefore, we get $x_\alpha\fc 0$. 
	
$(iii)\Rightarrow(i)$ It is just the implication of Remark \ref{$mo$-cont-0}.
\end{proof}

\begin{corollary}\label{of + f implies o}
Let $E$ be an $mo$-continuous and $mo$-complete $f$-algebra. Then $E$ is order complete. 
\end{corollary}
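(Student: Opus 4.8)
The plan is to obtain the corollary as a direct concatenation of Theorem \ref{of-contchar} and Lemma \ref{mo convergence positive}. First I would unwind the definition: $E$ is order complete precisely when every net $(x_\alpha)_{\alpha\in A}$ in $E$ with $0\le x_\alpha\uparrow\le x$ (for some $x\in E_+$) admits a supremum $\sup_\alpha x_\alpha$ in $E$. So I fix such a net and aim to produce its supremum.

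Since $E$ is $mo$-continuous and $mo$-complete, Theorem \ref{of-contchar} is available, and its implication $(i)\Rightarrow(ii)$ says exactly that the increasing, order bounded net $(x_\alpha)$ is $mo$-Cauchy. Invoking $mo$-completeness once more, there is then some $y\in E$ with $x_\alpha\fc y$. Because $(x_\alpha)$ is increasing, Lemma \ref{mo convergence positive}(ii) applies and yields $x_\alpha\oc y$; moreover, as the proof of that lemma shows, an increasing $mo$-convergent net order converges to its $mo$-limit \emph{from below}, that is $x_\alpha\uparrow y$. This is precisely the statement that $y=\sup_\alpha x_\alpha$. Hence $\sup_\alpha x_\alpha$ exists in $E$, and since the net was arbitrary, $E$ is order complete.

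I do not expect any genuine obstacle here; the work has already been done in the two cited results. The only points to be careful about are that Theorem \ref{of-contchar} is formulated under the standing hypothesis of $mo$-completeness, so this hypothesis is used twice — once to license the equivalence $(i)\Leftrightarrow(ii)$ and once to extract the $mo$-limit $y$ — and that one should quote Lemma \ref{mo convergence positive}(ii) in its sharp form ($x_\alpha\uparrow y$, not merely $x_\alpha\oc y$) so that $y$ is identified with the supremum.
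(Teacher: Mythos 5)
Your proof is correct and follows essentially the same route as the paper: apply Theorem \ref{of-contchar} $(i)\Rightarrow(ii)$ to see the bounded increasing net is $mo$-Cauchy, use $mo$-completeness to extract an $mo$-limit, and invoke Lemma \ref{mo convergence positive} to conclude the net increases to that limit, which is therefore the supremum. Your remark that one needs the sharp form $x_\alpha\uparrow y$ (not merely $x_\alpha\oc y$) is exactly the point the paper's proof relies on as well.
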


\begin{proof}
Suppose $0\leq x_\alpha\uparrow\leq u$ in $E$. We show the existence of supremum of $(x_\alpha)$. By considering Theorem \ref{of-contchar} $(ii)$, we see that $(x_\alpha)$ is an $mo$-Cauchy net. Hence, there is $x\in E$ such that $x_\alpha\fc x$ because $E$ is $mo$-complete. It follows from Lemma \ref{mo convergence positive} that $x_\alpha\uparrow x$ because of $x_\alpha\uparrow$ and $x_\alpha\fc x$. Therefore, $E$ is order complete.
\end{proof}

\begin{proposition}\label{f-KB is of}
Every $mo$-KB-space is $mo$-continuous. 
\end{proposition}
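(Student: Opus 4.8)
The plan is to reduce the claim to order-decreasing nets via Remark~\ref{$mo$-cont-0}: since $E$ is $mo$-continuous if and only if $x_\alpha\downarrow 0$ implies $x_\alpha\fc 0$, it suffices to take an arbitrary net $(x_\alpha)_{\alpha\in A}$ in $E$ with $x_\alpha\downarrow 0$ and prove $x_\alpha\fc 0$. Note that $x_\alpha\in E_+$ for every $\alpha$, because $0=\inf_\alpha x_\alpha\le x_\alpha$.

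Next I would fix an index $\alpha_0\in A$ and pass to the net $(x_{\alpha_0}-x_\alpha)_{\alpha\ge\alpha_0}$. Because $(x_\alpha)$ is decreasing, this net is increasing, and it is order bounded since $0\le x_{\alpha_0}-x_\alpha\le x_{\alpha_0}$ for $\alpha\ge\alpha_0$. Thus it is an order bounded increasing net in $E_+$, so the $mo$-KB hypothesis applies and produces an $mo$-limit $y\in E$ with $x_{\alpha_0}-x_\alpha\fc y$ as $\alpha\to\infty$ over $\alpha\ge\alpha_0$.

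The key step is to identify $y$. Since $(x_{\alpha_0}-x_\alpha)_{\alpha\ge\alpha_0}$ is monotone, Lemma~\ref{mo convergence positive}(ii) upgrades its $mo$-convergence to order convergence, so $x_{\alpha_0}-x_\alpha\uparrow y$, that is, $y=\sup_{\alpha\ge\alpha_0}(x_{\alpha_0}-x_\alpha)=x_{\alpha_0}-\inf_{\alpha\ge\alpha_0}x_\alpha$. As $(x_\alpha)$ is decreasing, a tail has the same infimum as the whole net, namely $0$, hence $y=x_{\alpha_0}$. Therefore $x_{\alpha_0}-x_\alpha\fc x_{\alpha_0}$, and by linearity of $mo$-convergence we get $x_\alpha=x_{\alpha_0}-(x_{\alpha_0}-x_\alpha)\fc x_{\alpha_0}-x_{\alpha_0}=0$ along the tail $\alpha\ge\alpha_0$; since $mo$-convergence of a net is equivalent to $mo$-convergence of any tail, $x_\alpha\fc 0$, and $E$ is $mo$-continuous.

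I expect the only nonroutine point to be recognizing that the $mo$-KB property should be fed the auxiliary net $(x_{\alpha_0}-x_\alpha)$ rather than $(x_\alpha)$ directly, and then pinning its $mo$-limit down to $x_{\alpha_0}$; once Lemma~\ref{mo convergence positive}(ii) is invoked to turn monotone $mo$-convergence into order convergence, the identification $y=x_{\alpha_0}$ is the standard supremum/infimum computation, and the rest is bookkeeping.
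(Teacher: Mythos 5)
Your proof is correct and follows essentially the same route as the paper: fix $\alpha_0$, feed the increasing order bounded net $(x_{\alpha_0}-x_\alpha)_{\alpha\ge\alpha_0}$ to the $mo$-KB property, use Lemma~\ref{mo convergence positive}(ii) to upgrade to order convergence and identify the limit as $x_{\alpha_0}$, then conclude $x_\alpha\fc 0$. Your appeal directly to Remark~\ref{$mo$-cont-0} for the reduction is, if anything, slightly cleaner than the paper's citation of Theorem~\ref{of-contchar}, since the latter is stated under an $mo$-completeness hypothesis that is not needed for this direction.
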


\begin{proof} 
Assume $x_\alpha\downarrow 0$ in $E$. From Theorem \ref{of-contchar}, it is enough to show $x_\alpha\fc 0$. Let's fix an index $\alpha_0$, and define another net  $y_\alpha:=x_{\alpha_0}-x_\alpha$ for $\alpha\ge\alpha_0$. Then it is clear that $0\leq y_\alpha\uparrow\le x_{\alpha_0}$, i.e., $(y_\alpha)$ is increasing and order bounded net in $E$. Since $E$ is a $mo$-KB-space, there exists $y\in E$ such that $y_\alpha\fc y$. Thus, by Lemma \ref{mo convergence positive}, we have $y_\alpha\oc y$. Hence, $y=\sup\limits_{\alpha\geq\alpha_0}y_\alpha=\sup\limits_{\alpha\geq\alpha_0}(x_{\alpha_0}-x_\alpha)=x_{\alpha_0}$ because of $x_\alpha\downarrow 0$. Therefore, we get $y_\alpha=x_{\alpha_0}-x_\alpha\fc x_{\alpha_0}$ or $x_\alpha\fc0$ because of $y_\alpha\fc y$. 
\end{proof}

\begin{proposition}
Every $mo$-KB-space is order complete.
\end{proposition}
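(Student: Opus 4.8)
The plan is to argue directly from the definitions, essentially recycling the proof of Corollary~\ref{of + f implies o} but using the $mo$-KB hypothesis to produce the $mo$-limit instead of appealing to $mo$-completeness. So suppose $E$ is an $mo$-KB-space; I want to verify the defining property of order completeness, namely that $0\leq x_\alpha\uparrow\leq u$ in $E$ forces the existence of $\sup_\alpha x_\alpha$ in $E$.

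First I would fix an arbitrary net with $0\leq x_\alpha\uparrow\leq u$ in $E$ and observe that this is precisely an order bounded increasing net lying in $E_+$. By the very definition of an $mo$-KB-space (Definition~\ref{$mo$-notions}(iv)), there is some $x\in E$ with $x_\alpha\fc x$.

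Next, since $(x_\alpha)$ is monotone (increasing) and $mo$-converges to $x$, Lemma~\ref{mo convergence positive}(ii) applies and yields $x_\alpha\oc x$; in fact, inspecting the proof of that lemma, one obtains $x_\alpha\uparrow x$, that is, $x=\sup_\alpha x_\alpha$. As the chosen net was an arbitrary order bounded increasing net in $E_+$, this shows that every such net has a supremum in $E$, which is exactly order completeness of $E$.

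I do not expect a real obstacle here; the only point requiring a little care is that Lemma~\ref{mo convergence positive}(ii) should be invoked for its full strength, namely that it not merely gives order convergence but, through its proof, identifies the limit as the supremum of the increasing net. One could alternatively combine Proposition~\ref{f-KB is of} (every $mo$-KB-space is $mo$-continuous) with Corollary~\ref{of + f implies o}, but that path additionally needs $mo$-completeness of $E$, so the direct argument above is cleaner and self-contained.
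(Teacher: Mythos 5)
Your argument is correct and is essentially the same as the paper's: invoke the $mo$-KB property to obtain an $mo$-limit $x$ of the order bounded increasing net, then apply Lemma~\ref{mo convergence positive}(ii) (whose proof shows $x_\alpha\uparrow x$) to conclude the supremum exists. Your remark that one needs the full strength $x_\alpha\uparrow x$ rather than mere order convergence is exactly the point the paper relies on as well.
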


\begin{proof}
Suppose $0\leq x_\alpha\uparrow\leq z$ is an order bounded and increasing net in an $mo$-KB-space $E$ for some $z\in E_+$. Then $x_\alpha\fc x$ for some $x\in E$ because $E$ is $mo$-KB-space. By Lemma \ref{mo convergence positive}, we have $x_\alpha\uparrow x$ because of $x_\alpha\uparrow$ and $x_\alpha\fc x$. So, $E$ is order complete.
\end{proof}

\begin{proposition}\label{$o$-closed sublattice of $mo$-KB}
Let $Y$ be an sub-$f$-algebra and order closed sublattice of an $mo$-KB-space $E$. Then $Y$ is also a $mo$-KB-space.
\end{proposition}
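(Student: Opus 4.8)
The plan is to verify directly the defining property of an $mo$-KB-space for $Y$: that every order bounded increasing net in $Y_+$ is $mo$-convergent in $Y$. So I would start with a net $(y_\alpha)$ in $Y$ with $0\leq y_\alpha\uparrow\leq y$ for some $y\in Y_+$. Since the order of $Y$ is induced from $E$ and $Y_+\subseteq E_+$, this same net is order bounded and increasing in the ambient $mo$-KB-space $E$, so there exists $x\in E$ with $y_\alpha\fc x$ in $E$.

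Next I would upgrade this $mo$-convergence to genuine order convergence. Because $(y_\alpha)$ is monotone (increasing) and $y_\alpha\fc x$, Lemma \ref{mo convergence positive}$(ii)$ gives $y_\alpha\oc x$ in $E$, and in fact $y_\alpha\uparrow x$ there. This is where the hypothesis that $Y$ is an order closed sublattice of $E$ comes in: the net $(y_\alpha)$ lies entirely in $Y$ and $o$-converges in $E$ to $x$, hence $x\in Y$. (Since $Y$ is moreover a sublattice, $x$ is in fact the supremum of $(y_\alpha)$ computed within $Y$, though this is not needed for the conclusion.)

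Finally I would push the $mo$-convergence back down into $Y$. As $Y$ is a sub-$f$-algebra it is in particular a linear subspace, so $y_\alpha-x\in Y$ and $(y_\alpha-x)\fc 0$ in $E$; by the observation preceding Theorem \ref{$up$-regular} that $mo$-convergence always passes to a sub-$f$-algebra, we obtain $(y_\alpha-x)\fc 0$ in $Y$, i.e.\ $y_\alpha\fc x$ in $Y$. Thus the arbitrary order bounded increasing net $(y_\alpha)$ in $Y_+$ is $mo$-convergent in $Y$, which is precisely the assertion that $Y$ is an $mo$-KB-space.

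The one step that genuinely needs the full strength of the hypotheses — and the main obstacle to a shorter argument — is the middle one: $mo$-convergence in $E$ does not by itself produce a limit in $Y$, nor does it descend from $E$ to $Y$ in general, so it is essential to route through order convergence via Lemma \ref{mo convergence positive} and then invoke order closedness of $Y$ to locate the limit inside $Y$. The first and last steps are routine.
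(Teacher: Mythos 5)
Your proof is correct and follows essentially the same route as the paper's: pass the net up to $E$, use the $mo$-KB property there, apply Lemma \ref{mo convergence positive} to get $y_\alpha\uparrow x$, and use order closedness of $Y$ to conclude $x\in Y$. You are in fact slightly more careful than the paper, which stops at $x\in Y$ and leaves implicit your final step of descending the $mo$-convergence from $E$ back to $Y$ via the observation preceding Theorem \ref{$up$-regular}.
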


\begin{proof}
Let $(y_\alpha)$ be a net in $Y$ such that $0\leq y_\alpha\uparrow\leq y$ for some $y\in Y_+$. Since $E$ is a $mo$-KB-space, there exists $x\in E_+$ such that $y_\alpha\fc x$. By Lemma \ref{mo convergence positive}, we have $y_\alpha\uparrow x$, and so $x\in Y$ because $Y$ is order closed. Thus $Y$ is a $mo$-KB-space.
\end{proof}

\end{document}